\numberwithin{equation}{section}
\newtheorem{thm}{Theorem}[section]
\newtheorem{prop}[thm]{Proposition}
\newtheorem{cor}[thm]{Corollary}
\newtheorem{lem}[thm]{Lemma}
\theoremstyle{definition}
\newtheorem{defn}[thm]{Definition}
\newtheorem{rem}[thm]{Remark}
\newcommand{\Z}{\mathbb{Z}}
\newcommand{\R}{\mathbb{R}}
\newcommand{\idx}[1]{i\left(#1\right)}
\newcommand{\au}[1]{\textsc{#1}}
\newcommand{\titleart}[1]{\textrm{#1}}
\newcommand{\jour}[1]{\textit{#1}}
\newcommand{\volart}[1]{\textbf{#1}}
\newcommand{\no}[1]{\textit{no.} {#1}}
\renewcommand{\rho}{\varrho}
\renewcommand{\theta}{\vartheta}
\begin{document}

%TOPMATTER

\title[The 
eigenvalues of the $p$-Laplace operator]{On 
the dependence on $p$ of the variational
eigenvalues of the $p$-Laplace operator}

\author{Marco Degiovanni \and Marco Marzocchi}
\address{Dipartimento di Matematica e Fisica\\
         Universit\`a Cattolica del Sacro Cuore\\
         Via dei Musei 41\\
         25121 Bre\-scia, Italy}
\email{marco.degiovanni@unicatt.it, marco.marzocchi@unicatt.it}
\thanks{The research of the authors was partially supported by 
        Gruppo Nazionale per l'Analisi Matematica,
        la Probabilit\`a e le loro Applicazioni (INdAM)}
				
\keywords{Nonlinear eigenvalues, $p$-Laplace operator,
variational convergence}

\subjclass[2000]{35P30, 35B35}

%END TOPMATTER

%--------------------------------------------------------------------

%
\begin{abstract}
We study the behavior of the variational eigenvalues of the 
$p$-Laplace operator, with homogeneous Dirichlet boundary
condition, when $p$ is varying.
After introducing an auxiliary problem, we characterize the
continuity answering, in particular, a question
raised in~\cite{lindqvist1993}.
\end{abstract}
\maketitle %AMSART

%--------------------------------------------------------------------

\section{Introduction}
Let $\Omega$ be a connected and bounded open subset of $\R^N$
and let $1<p<\infty$.
The study of the nonlinear eigenvalue problem
\begin{equation}
\label{eq:main}
\begin{cases}
- \Delta_p u 
= \lambda |u|^{p-2}u
&\qquad\text{in $\Omega$}\,,\\
\noalign{\medskip}
u=0
&\qquad\text{on $\partial\Omega$}\,,
\end{cases}
\end{equation}
namely
\[
\begin{cases}
u\in W^{1,p}_0(\Omega)\,,\\
\noalign{\medskip}
\displaystyle{
\int_\Omega |\nabla u|^{p-2} \nabla u\cdot \nabla v\,dx =
\lambda \int_\Omega |u|^{p-2}u v\,dx}
\qquad\forall v\in W^{1,p}_0(\Omega)\,,
\end{cases}
\]
has been the object of several papers, starting 
from~\cite{lindqvist1990}, where it has been proved that 
the first eigenvalue is simple and is the unique
eigenvalue which admits a positive eigenfunction.
Alternative proofs and more general equations have been
the object of the subsequent 
papers~\cite{belloni_kawohl2002, brasco_franzina2012, 
kawohl_lindqvist2006, kawohl_lucia_prashanth2007,
lucia_prashanth2006, lucia_schuricht2013},
while the existence of a diverging sequence of eigenvalues
has been proved under quite general assumptions 
in~\cite{lucia_schuricht2013, szulkin_willem1999}.
\par
If we denote by $\lambda_p^{(1)}$ the first eigenvalue
of~\eqref{eq:main} and by $u_p$ the associated
positive eigenfunction such that
\[
\int_\Omega u_p^p\,dx = 1\,,
\]
a challenging question concerns the behavior of
$\lambda_p^{(1)}$ and $u_p$ with respect to $p$.
As shown in~\cite{lindqvist1993}, about the dependence 
from the right one has in full generality 
\begin{alignat*}{3}
&\lim_{s\to p^+} \lambda_s^{(1)} = \lambda_p^{(1)}\,,\\
&\lim_{s\to p^+}
\int_\Omega |\nabla u_s - \nabla u_p|^p\,dx = 0\,,
\end{alignat*}
while the ``corresponding'' assertions from the left
\begin{alignat*}{3}
&\lim_{s\to p^-} \lambda_s^{(1)} = \lambda_p^{(1)}\,,\\
&\lim_{s\to p^-}
\int_\Omega |\nabla u_s - \nabla u_p|^s\,dx = 0
\end{alignat*}
are true under some further assumption about $\partial\Omega$.
A counterexample in the same paper~\cite{lindqvist1993}
shows that otherwise in general they are false.
\par
A related question concerns the equivalence, without
any assumption on $\partial\Omega$, between the two 
assertions.
In~\cite{lindqvist1993} it is proved that, if
\[
\lim_{s\to p^-}
\int_\Omega |\nabla u_s - \nabla u_p|^s\,dx = 0\,,
\]
then
\[
\lim_{s\to p^-} \lambda_s^{(1)} = \lambda_p^{(1)}\,,
\]
while the converse is proposed as an open problem.
Subsequent papers have considered more general situations 
(see~\cite{elkhalil_elmanouni_ouanan2006,
elkhalil_lindqvist_touzani2004}),
but the previous question seems to be still unsolved
(see also~\cite{lindqvist2008}).
\par
The main purpose of this paper is to introduce an auxiliary
problem which allows to describe the behaviour
of $\lambda_s^{(1)}$ and $u_s$ as $s\to p^-$
(see the next Theorem~\ref{thm:limlambda}).
Then in Theorem~\ref{thm:char} we provide several equivalent
characterizations of the fact that
\[
\lim_{s\to p^-} \lambda_s^{(1)} = \lambda_p^{(1)}\,.
\]
In particular, in Corollary~\ref{cor:conv} we give a positive
answer to the mentioned open problem.
\par
We also consider the dependence on $s$ of the full sequence
$(\lambda_s^{(m)})$ of the variational eigenvalues, defined 
according to some topological index $i$.
In particular, in Corollary~\ref{cor:high} we prove that
\[
\lim_{s\to p} \lambda_s^{(m)} =
\lambda_p^{(m)} 
\qquad\forall m\geq 1
\]
if and only if
\[
\lim_{s\to p^-} \lambda_s^{(1)} =
\lambda_p^{(1)} \,.
\]
The convergence of $\lambda_s^{(m)}$ has been already
studied in~\cite{champion_depascale2007}, under
the $\Gamma$-convergence of the associated functionals.
More specifically, in~\cite{parini2011} it has been proved
the continuity of $\lambda_s^{(m)}$ with respect to $s$,
provided that $\partial\Omega$ is smooth enough.

%--------------------------------------------------------------------

\section{The first eigenvalue with respect to a larger space}
\label{sect:large}
Throughout the paper, $\Omega$ will denote a bounded
and open subset of~$\R^N$.
No assumption will be imposed \emph{a priori} about the 
regularity of $\partial\Omega$.
We will also denote by $\mathcal{L}^N$ the Lebesgue 
measure in $\R^N$.
\par
If $u\in W^{1,p}(\Omega)$, the condition 
``\emph{$u=0$ on $\partial\Omega$}'' is usually expressed
by saying that $u\in W^{1,p}_0(\Omega)$.
If $\partial\Omega$ is smooth enough, this is perfectly
reasonable; if not, other (nonequivalent) formulations
can be proposed.
In the line of the approach of~\cite{lindqvist1993}, 
if $1<p<\infty$ we set
\[
W^{1,p_-}_0(\Omega) =
W^{1,p}(\Omega) \cap\left(\bigcap_{1<s<p} W^{1,s}_0(\Omega)\right)
= \bigcap_{1<s<p} \left(W^{1,p}(\Omega)\cap W^{1,s}_0(\Omega)
\right)\,.
\]
\begin{prop}
\label{prop:wp-}
The following facts hold:
\begin{itemize}
\item[$(a)$]
$W^{1,p_-}_0(\Omega)$ is a closed vector subspace
of $W^{1,p}(\Omega)$ satisfying
\[
W^{1,p}_0(\Omega) \subseteq W^{1,p_-}_0(\Omega)\,;
\]
\item[$(b)$]
for every $u\in W^{1,p_-}_0(\Omega)$, the function
\[
\hat{u} = 
\begin{cases}
u &\qquad\text{on $\Omega$}\,,\\
\noalign{\medskip}
0 &\qquad\text{on $\R^N\setminus\Omega$}
\end{cases}
\]
belongs to $W^{1,p}(\R^N)$; in particular,
\[
\left(\int_\Omega |\nabla u|^p\,dx\right)^{1/p}
\]
is a norm on $W^{1,p_-}_0(\Omega)$ equivalent to the one
induced by $W^{1,p}(\Omega)$;
\item[$(c)$]
if $p<N$, we have 
$W^{1,p_-}_0(\Omega)\subseteq L^{p^*}(\Omega)$ and
\begin{multline*}
\null\qquad\qquad
\inf\left\{\frac{\int_\Omega |\nabla u|^p\,dx}{
\left(\int_\Omega |u|^{p^*}\,dx\right)^{p/{p^*}}}:\,\,
u\in W^{1,p}_0(\Omega)\setminus\{0\}\right\} \\
=
\inf\left\{\frac{\int_\Omega |\nabla u|^p\,dx}{
\left(\int_\Omega |u|^{p^*}\,dx\right)^{p/{p^*}}}:\,\,
u\in W^{1,p_-}_0(\Omega)\setminus\{0\}\right\} \,;
\qquad\qquad\null
\end{multline*}
\item[$(d)$]
if $p>N$, we have $W^{1,p_-}_0(\Omega)=W^{1,p}_0(\Omega)$;
\item[$(e)$]
if $\Omega$ has the segment property, 
we have $W^{1,p_-}_0(\Omega)=W^{1,p}_0(\Omega)$ for any $p$.
\end{itemize}
\end{prop}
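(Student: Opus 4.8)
The plan is to derive all five assertions from one observation: if $u\in W^{1,p_-}_0(\Omega)$ then, choosing any $s\in(1,p)$, we have $u\in W^{1,s}_0(\Omega)$, so the standard fact that the zero-extension of a $W^{1,s}_0$-function lies in $W^{1,s}(\R^N)$ with distributional gradient the zero-extension of $\nabla u$ applies to $u$; since moreover $u\in W^{1,p}(\Omega)$, both $\hat u$ and that (already identified) gradient lie in $L^p(\R^N)$, whence $\hat u\in W^{1,p}(\R^N)$. Part $(a)$ needs no extension: an intersection of linear subspaces is a linear subspace; boundedness of $\Omega$ makes $W^{1,p}(\Omega)\hookrightarrow W^{1,s}(\Omega)$ continuous for $1<s<p$, so a $C_c^\infty(\Omega)$-approximation in $W^{1,p}(\Omega)$ is also one in $W^{1,s}(\Omega)$, giving $W^{1,p}_0(\Omega)\subseteq W^{1,p_-}_0(\Omega)$, and the same embedding, applied to a sequence converging in $W^{1,p}(\Omega)$, together with the closedness of each $W^{1,s}_0(\Omega)$ in $W^{1,s}(\Omega)$, shows $W^{1,p_-}_0(\Omega)$ is closed in $W^{1,p}(\Omega)$. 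To finish $(b)$: $\hat u$ has compact support in any ball $B$ with $\overline\Omega\subset B$, hence lies in $W^{1,p}_0(B)$ by mollification, and the Poincar\'e inequality on $B$ yields $\int_\Omega|u|^p\,dx\le C\int_\Omega|\nabla u|^p\,dx$; together with $(\int_\Omega|\nabla u|^p\,dx)^{1/p}\le\|u\|_{W^{1,p}(\Omega)}$ this gives the norm equivalence, while a vanishing gradient forces $\hat u$ to be constant, hence $\equiv0$ (being $0$ outside the bounded set $\Omega$), so the quantity is indeed a norm.

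For $(c)$, with $p<N$, the Sobolev embedding of $W^{1,p}(\R^N)$ applied to $\hat u$ gives $u\in L^{p^*}(\Omega)$ and the bound $\|u\|_{L^{p^*}(\Omega)}\le S\|\nabla u\|_{L^p(\Omega)}$; the infimum over $W^{1,p}_0(\Omega)$ is $\ge$ the one over $W^{1,p_-}_0(\Omega)$ by $(a)$, and for the reverse inequality I would, given $u\in W^{1,p_-}_0(\Omega)\setminus\{0\}$, approximate $\hat u$ in $W^{1,p}(\R^N)$ --- hence in $L^{p^*}(\R^N)$ --- by functions $\psi_k\in C_c^\infty(\R^N)$, so that the quotient $\int|\nabla\psi_k|^p/(\int|\psi_k|^{p^*})^{p/p^*}$ tends to the corresponding quotient for $u$; since this quotient is invariant under translations and dilations, rescaling and translating $\psi_k$ so that its support sits inside $\Omega$ produces a competitor in $C_c^\infty(\Omega)$ with the same value, so $\inf_{W^{1,p}_0(\Omega)}$ is no larger than the quotient of $u$, and taking the infimum over such $u$ makes the two infima coincide. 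For $(e)$ it again suffices, by $(a)$, to prove $W^{1,p_-}_0(\Omega)\subseteq W^{1,p}_0(\Omega)$, starting from $\hat u\in W^{1,p}(\R^N)$: cover the compact set $\partial\Omega$ by finitely many of the neighborhoods supplied by the segment property, adjoin $\Omega$, take a subordinate smooth partition of unity, and decompose $\hat u$ accordingly; the piece supported in $\Omega$ is already in $W^{1,p}_0(\Omega)$, and each boundary piece, after translation by a small multiple of the associated segment vector, has compact support inside $\Omega$ and, after a further mollification, lies in $C_c^\infty(\Omega)$ --- letting the translation parameter tend to $0$ and using continuity of translations in $W^{1,p}(\R^N)$ places each piece, hence $u$, in the closed subspace $W^{1,p}_0(\Omega)$.

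Part $(d)$, with $p>N$, is the one I expect to be delicate; again only the inclusion $\subseteq$ is at stake. I would fix $s$ with $N<s<p$ and use Morrey's inequality: a $C_c^\infty(\Omega)$-approximation of $u$ in $W^{1,s}(\Omega)$ extends to an approximation of $\hat u$ that is \emph{uniform} on $\R^N$, so the continuous representative of $\hat u$ (which by $(b)$ also represents an element of $W^{1,p}(\R^N)$) vanishes on the \emph{whole} of $\R^N\setminus\Omega$, not merely on $\R^N\setminus\overline\Omega$. Then for $\delta>0$ the truncation $w_\delta=\operatorname{sign}(\hat u)\,(|\hat u|-\delta)_+$ belongs to $W^{1,p}(\R^N)$ and is supported in the closed, bounded --- hence compact --- subset $\{|\hat u|\ge\delta\}$ of $\Omega$, so $w_\delta|_\Omega\in W^{1,p}_0(\Omega)$; and $w_\delta\to\hat u$ in $W^{1,p}(\R^N)$ as $\delta\to0^+$ by dominated convergence, using that $\nabla\hat u=0$ a.e.\ on $\{\hat u=0\}$. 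Closedness of $W^{1,p}_0(\Omega)$ then gives $u\in W^{1,p}_0(\Omega)$.

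The main obstacle is indeed $(d)$. The rescaling device used for $(c)$ is of no help here, since it yields competitors with the correct quotient but not an approximation of $u$, and a crude cut-off of $u$ near $\partial\Omega$ fails because the cut-off gradient is uncontrolled. What makes the argument go through is precisely that $W^{1,s}_0$-membership for some $s>N$ upgrades the boundary condition to the pointwise statement that the continuous extension of $u$ vanishes on all of $\R^N\setminus\Omega$; only with this in hand does truncation towards zero produce genuinely compactly supported approximants inside $\Omega$.
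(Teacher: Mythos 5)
Your argument is correct, and its backbone is the same as the paper's: everything is driven by the observation that for $u\in W^{1,p_-}_0(\Omega)$ the zero extension $\hat u$ lies in $W^{1,p}(\R^N)$, obtained by combining $W^{1,s}_0$-membership for $s<p$ with the $L^p$-integrability of $u$ and $\nabla u$; parts $(a)$ and $(b)$ coincide with the paper's proof almost verbatim. Where you genuinely diverge is that you prove in place what the paper delegates to references. For $(c)$ the paper puts $\hat u$ into $W^{1,p}_0(U)$ for a bounded open $U\supset\overline\Omega$ and quotes Talenti's domain-independence of the Sobolev quotient's infimum, whereas you rederive that independence by approximating $\hat u$ with $C^\infty_c(\R^N)$ functions and using the translation/dilation invariance of the quotient to shrink the competitors into $\Omega$; both are fine, yours is self-contained, the paper's is shorter. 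For $(d)$ the paper cites the Brezis criterion (a $W^{1,p}$ function continuous on $\overline\Omega$ and vanishing on $\partial\Omega$ lies in $W^{1,p}_0(\Omega)$), and your truncation $w_\delta=\operatorname{sign}(\hat u)\,(|\hat u|-\delta)_+$ is exactly the proof of that criterion; moreover your preliminary step --- uniform convergence on $\R^N$ of the zero-extended $C^\infty_c(\Omega)$ approximations in $W^{1,s}$ for some $N<s<p$, forcing the continuous representative of $\hat u$ to vanish on all of $\R^N\setminus\Omega$ --- makes explicit a point the paper states tersely, and it is the right justification when $\R^N\setminus\Omega$ is locally negligible near parts of $\partial\Omega$ (slit-type boundaries), where ``$\hat u=0$ a.e.\ off $\Omega$'' plus continuity alone would not suffice. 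For $(e)$ the paper cites El Khalil--Lindqvist--Touzani, while you reprove the statement by the standard segment-property device (finite cover of $\partial\Omega$, partition of unity, translation of each boundary piece along its segment vector into a compact subset of $\Omega$, mollification, and continuity of translations). In short: same route, but your version buys self-containedness and a more careful treatment of the boundary-vanishing step in $(d)$, at the cost of length.
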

\begin{proof}
Since $W^{1,p}(\Omega)\cap W^{1,s}_0(\Omega)$ is a
closed vector subspace of $W^{1,p}(\Omega)$ containing
$W^{1,p}_0(\Omega)$, assertion~$(a)$ follows.
\par
If $u\in W^{1,p_-}_0(\Omega)$, the function
\[
\hat{u} = 
\begin{cases}
u &\qquad\text{on $\Omega$}\,,\\
\noalign{\medskip}
0 &\qquad\text{on $\R^N\setminus\Omega$}
\end{cases}
\]
belongs to $W^{1,s}(\R^N)$ for any $s<p$ and
\[
- \int_{\R^N} \hat{u} D_jv\,dx =
\int_\Omega D_ju v\,dx
\qquad\forall v\in C^1_c(\R^N)\,.
\]
It follows $\hat{u}\in W^{1,p}(\R^N)$, whence assertion $(b)$.
\par
If $p<N$, let $U$ be a bounded open subset of $\R^N$
with $\overline{\Omega}\subseteq U$
and let $u\in W^{1,p_-}_0(\Omega)$.
Then $\hat{u}\in W^{1,p}_0(U)\subseteq L^{p^*}(U)$ 
(see e.g.~\cite[Lemma~9.5]{brezis2011}) and
\[
\frac{\int_U |\nabla \hat{u}|^p\,dx}{
\left(\int_U |\hat{u}|^{p^*}\,dx\right)^{p/{p^*}}} =
\frac{\int_\Omega |\nabla u|^p\,dx}{
\left(\int_\Omega |u|^{p^*}\,dx\right)^{p/{p^*}}}\,.
\]
Assertion~$(c)$ follows from the fact that 
\[
\inf\left\{\frac{\int_\Omega |\nabla v|^p\,dx}{
\left(\int_\Omega |v|^{p^*}\,dx\right)^{p/{p^*}}}:\,\,
v\in W^{1,p}_0(\Omega)\setminus\{0\}\right\} 
\]
is independent of $\Omega$ (see e.g.~\cite{talenti1976}).
\par
If $p>N$ and $u\in W^{1,p_-}_0(\Omega)$, we have
$\hat{u}\in C(\overline{\Omega})\cap W^{1,p}(\Omega)$
with $\hat{u}=0$ on $\partial\Omega$.
It follows that $\hat{u}\in W^{1,p}_0(\Omega)$
(see e.g.~\cite[Theorem~9.17]{brezis2011}, where the proof
of $(i)\Rightarrow(ii)$ does not use the regularity
of $\partial\Omega$), whence assertion~$(d)$.
\par
Assertion $(e)$ is taken
from~\cite[Theorem~2.1]{elkhalil_lindqvist_touzani2004}.
\end{proof}
Let us point out that the counterexample
in~\cite{lindqvist1993} shows that  
$W^{1,p_-}_0(\Omega)$ can be strictly larger than
$W^{1,p}_0(\Omega)$ in the case $1<p\leq N$
(see also the next Remark~\ref{rem:diff}).
\par
Now we set
\begin{alignat*}{3}
&\lambda_p^{(1)} &&= 
\inf\left\{\frac{\int_\Omega |\nabla u|^p\,dx}{
\int_\Omega |u|^p\,dx}:\,\,
u\in W^{1,p}_0(\Omega)\setminus\{0\}\right\}\,,\\
&\underline{\lambda}_p^{(1)} &&= 
\inf\left\{\frac{\int_\Omega |\nabla u|^p\,dx}{
\int_\Omega |u|^p\,dx}:\,\,
u\in W^{1,p_-}_0(\Omega)\setminus\{0\}\right\}\,.
\end{alignat*}
It is easily seen that also the infimum defining
$\underline{\lambda}_p^{(1)}$ is achieved and 
we clearly have
\[
0 < \underline{\lambda}_p^{(1)} \leq
\lambda_p^{(1)}\,.
\]
More precisely, there exists $v\in W^{1,p_-}_0(\Omega)$
such that
\[
v\geq 0\quad\text{a.e. in $\Omega$}\,,\qquad
\int_\Omega v^p\,dx = 1\,,\qquad
\int_\Omega |\nabla v|^p\,dx =  
\underline{\lambda}_p^{(1)}\,.
\]
According to~\cite{lindqvist1990, lindqvist1993},
if $\Omega$ is connected we also denote by $u_p$ 
the positive eigenfunction in $W^{1,p}_0(\Omega)$ 
associated with $\lambda_p^{(1)}$ such that
\[
\int_\Omega u_p^p\,dx = 1\,.
\]
In the next result we will see that something similar 
can be done with respect to the space $W^{1,p_-}_0(\Omega)$.
\begin{thm}
\label{thm:lambda1}
If $\Omega$ is connected, the following facts hold:
\begin{itemize}
\item[$(a)$]
there exists one and only one 
$\underline{u}_p\in W^{1,p_-}_0(\Omega)$ such that
\[
\text{$\underline{u}_p\geq 0$ a.e. in $\Omega$}\,,\qquad
\int_\Omega \underline{u}_p^p\,dx = 1\,,
\qquad
\int_\Omega |\nabla \underline{u}_p|^p\,dx
= \underline{\lambda}_p^{(1)} \,;
\]
moreover, we have 
$\underline{u}_p\in L^\infty(\Omega)\cap C^1(\Omega)$
and $\underline{u}_p>0$ in $\Omega$;
\item[$(b)$]
the set of $u$'s in $W^{1,p_-}_0(\Omega)$ such that
\[
\int_\Omega |\nabla u|^{p-2}\nabla u\cdot\nabla v\,dx
= 	\underline{\lambda}_p^{(1)}
\int_\Omega |u|^{p-2}u v\,dx
 \qquad\forall v\in W^{1,p_-}_0(\Omega)
\]
is a vector subspace of $W^{1,p_-}_0(\Omega)$ of
dimension $1$;
\item[$(c)$]
if $\lambda\in\R$ and 
$u\in W^{1,p_-}_0(\Omega)\setminus\{0\}$ satisfy
\[
\begin{cases}
\text{$u\geq 0$ a.e. in $\Omega$} \,,\\
\noalign{\medskip}
\displaystyle{
\int_\Omega |\nabla u|^{p-2}\nabla u\cdot\nabla v\,dx
= \lambda
\int_\Omega u^{p-1} v\,dx}
\qquad\forall v\in W^{1,p_-}_0(\Omega) \,,
\end{cases}
\] 
then $\lambda=\underline{\lambda}_p^{(1)}$
and $u=t\,\underline{u}_p$ for some $t>0$.
\end{itemize}
\end{thm}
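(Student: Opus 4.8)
The plan is to mimic, in the larger space $W^{1,p_-}_0(\Omega)$, the classical arguments of Lindqvist for the first eigenvalue, exploiting the crucial fact from Proposition~\ref{prop:wp-}(b) that every $u\in W^{1,p_-}_0(\Omega)$ extends by zero to a function $\hat u\in W^{1,p}(\R^N)$, so that the usual truncation, density and regularity machinery is available. First I would establish existence of a minimizer: take a minimizing sequence for $\underline\lambda_p^{(1)}$, normalize it in $L^p$, note it is bounded in $W^{1,p}(\Omega)$ by part~(b), pass to a weakly convergent subsequence with a limit $\underline u_p$ lying in the \emph{closed} subspace $W^{1,p_-}_0(\Omega)$ (here closedness from Proposition~\ref{prop:wp-}(a) is used), and use weak lower semicontinuity of $u\mapsto\int_\Omega|\nabla u|^p$ together with the compact embedding $W^{1,p}(\R^N)\hookrightarrow L^p_{\mathrm{loc}}$ applied to the $\hat u$'s (and the fact that they are supported in a fixed bounded set) to upgrade to $L^p$-convergence, so that $\int_\Omega\underline u_p^p=1$ and $\int_\Omega|\nabla\underline u_p|^p=\underline\lambda_p^{(1)}$. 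Replacing $\underline u_p$ by $|\underline u_p|$ (which has the same norms and still lies in the space) we may assume $\underline u_p\ge0$.

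Next I would derive the Euler–Lagrange equation. Since $W^{1,p_-}_0(\Omega)$ is a vector space, the standard variation $t\mapsto\int|\nabla(\underline u_p+tv)|^p/\int|\underline u_p+tv|^p$ is differentiable at $t=0$ for every $v$ in the space, yielding exactly the weak equation in part~(b) with $\lambda=\underline\lambda_p^{(1)}$; this also handles the ``moreover'' part of existence once positivity and regularity are in place. For the regularity and strict positivity claimed in~(a): because $\hat{\underline u}_p\in W^{1,p}(\R^N)$ solves $-\Delta_p\hat{\underline u}_p=\underline\lambda_p^{(1)}\,\hat{\underline u}_p^{\,p-1}$ in the \emph{open} set $\Omega$ (test functions in $C^1_c(\Omega)\subseteq W^{1,p}_0(\Omega)\subseteq W^{1,p_-}_0(\Omega)$), the interior De Giorgi–Moser / Serrin estimates give $\underline u_p\in L^\infty_{\mathrm{loc}}\cap C^{1,\alpha}_{\mathrm{loc}}(\Omega)$; boundedness on all of $\Omega$ follows by a Moser iteration that only uses the Sobolev inequality for $\hat{\underline u}_p$ on a large ball, i.e.\ Proposition~\ref{prop:wp-}(b)–(c). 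Then Harnack's inequality (Trudinger) applied to the nonnegative solution $\underline u_p$ on connected $\Omega$ forces either $\underline u_p\equiv0$—impossible by normalization—or $\underline u_p>0$ throughout $\Omega$.

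The heart of the theorem is the one-dimensionality in~(b), equivalently the simplicity statement, equivalently~(c); I would prove~(c) directly, which contains the others. Given a nonnegative $u\in W^{1,p_-}_0(\Omega)\setminus\{0\}$ solving the equation with parameter $\lambda$: first, testing with $u$ itself gives $\int|\nabla u|^p=\lambda\int u^p$, so $\lambda\ge\underline\lambda_p^{(1)}>0$; the interior regularity/Harnack argument above (the equation holds in $\Omega$ in particular) shows $u>0$ in $\Omega$. The key tool is the \emph{hidden convexity} of Díaz–Saá / Lindqvist: the functional $w\mapsto\int_\Omega|\nabla(w^{1/p})|^p\,dx$ is convex along segments in the cone of nonnegative $L^1$ functions. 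Concretely I would set $\sigma_t=((1-t)\underline u_p^{\,p}+t\,u^p)^{1/p}$ and use the inequality
\[
\int_\Omega|\nabla\sigma_t|^p\,dx\le(1-t)\int_\Omega|\nabla\underline u_p|^p\,dx+t\int_\Omega|\nabla u|^p\,dx,
\]
with equality iff $u$ is a scalar multiple of $\underline u_p$; one checks $\sigma_t\in W^{1,p_-}_0(\Omega)$ (it extends by zero since $\underline u_p$ and $u$ do, and a direct computation bounds $|\nabla\sigma_t|$ by $\max(|\nabla\underline u_p|,|\nabla u|)$-type quantities, cf.\ the convexity computation). Since $\int_\Omega\sigma_t^p=1-t+t\int u^p$, dividing by $\int\sigma_t^p$ and using minimality of $\underline\lambda_p^{(1)}$ on the left while the right side is an affine combination of $\underline\lambda_p^{(1)}$ and $\lambda\,(\int u^p)/1$... more cleanly: compare the Rayleigh quotient of $\sigma_t$ with the convex combination of those of $\underline u_p$ and $u/(\int u^p)^{1/p}$, both equal to $\underline\lambda_p^{(1)}$ and $\lambda$ respectively; letting $t\to0^+$ and using that $\underline\lambda_p^{(1)}$ is the minimum forces $\lambda=\underline\lambda_p^{(1)}$, and then the equality case of hidden convexity forces $u^p$ proportional to $\underline u_p^{\,p}$, i.e.\ $u=t\,\underline u_p$. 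Finally~(b) follows from~(c) by applying it to $u_+$ and $u_-$ of any eigenfunction (both nonzero if $u$ changes sign, each yielding $\underline\lambda_p^{(1)}$ as eigenvalue by testing—but an eigenfunction that is a multiple of the positive $\underline u_p$ cannot change sign, giving the contradiction), and uniqueness in~(a) is the case of~(c) applied to two normalized nonnegative minimizers. The main obstacle I anticipate is the verification that $\sigma_t\in W^{1,p_-}_0(\Omega)$ and the careful handling of the hidden-convexity inequality and its equality case on the possibly-irregular domain $\Omega$—but all of it is reduced, via the zero-extension $\hat{(\cdot)}$, to computations on $\R^N$ where the standard arguments of~\cite{lindqvist1990} apply verbatim.
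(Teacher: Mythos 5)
Your existence, Euler--Lagrange, regularity and positivity steps are in the spirit of the paper (which establishes $L^\infty\cap C^1$ via a truncation argument modeled on Lemma~4.1 of Lindqvist, then invokes DiBenedetto--Tolksdorf and V\'azquez), and those parts are essentially sound. The genuine gap is in your proof of~$(c)$, which is the heart of the theorem. As you set it up, hidden convexity plus minimality gives only
\[
\underline{\lambda}_p^{(1)}\;\le\;\frac{\int_\Omega|\nabla\sigma_t|^p\,dx}{\int_\Omega\sigma_t^p\,dx}\;\le\;(1-t)\,\underline{\lambda}_p^{(1)}+t\,\lambda\,,
\]
i.e.\ $\lambda\ge\underline{\lambda}_p^{(1)}$, which is already known from testing the equation with $u$ and yields no contradiction when $\lambda>\underline{\lambda}_p^{(1)}$; ``letting $t\to0^+$'' does not force equality, and since the chain of inequalities never closes, the equality case of hidden convexity is never triggered, so the proportionality $u=t\,\underline{u}_p$ does not follow either. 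Your argument only uses that the Rayleigh quotient of $u$ equals $\lambda$; to conclude $\lambda\le\underline{\lambda}_p^{(1)}$ you must exploit the full equation satisfied by $u$ against a Picone/D\'iaz--Sa\'a type test function such as $\underline{u}_p^p/u^{p-1}$ (equivalently, differentiate the energy along the hidden-convexity segment at the endpoint $u$). That test function is not obviously admissible, since $u>0$ in $\Omega$ is not bounded away from zero near $\partial\Omega$; this is exactly why the paper, following Lindqvist, regularizes with $u_k=u+1/k$, $w_k=w+1/k$, tests the two equations with $(u_k^p-w_k^p)/u_k^{p-1}$ and $(w_k^p-u_k^p)/w_k^{p-1}$, uses the pointwise convexity inequality for $|\xi|^p$ to get a sign, and passes to the limit by Lebesgue and Fatou; scaling $u\mapsto tu$ then forces $\lambda=\underline{\lambda}_p^{(1)}$, and strict convexity plus connectedness gives $u=e^c w$. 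Without this (or an equally careful Brasco--Franzina style justification of the admissible variations), assertion~$(c)$ is not proved.

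Two smaller points. First, to deduce~$(b)$ from~$(c)$ you apply~$(c)$ to $u^+$ and $u^-$, but these are not known to be eigenfunctions; the correct route (and the paper's) is to note that testing with $u$ itself shows any eigenfunction for $\underline{\lambda}_p^{(1)}$ is a minimizer of the Rayleigh quotient, hence by the first part of the proof it is $C^1$ and of one strict sign, after which~$(c)$ applied to $|u|$ gives $u=t\,\underline{u}_p$. Second, the membership $\sigma_t\in W^{1,p_-}_0(\Omega)$ and the admissibility of all your test functions in the space $W^{1,p_-}_0(\Omega)$ (not just $W^{1,p}_0$) do need the kind of verification you defer; this is routine via the zero-extension and order-comparison arguments, but it should be spelled out since the whole point of the theorem is that $\partial\Omega$ is irregular.
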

Since the proof follows the same lines
of~\cite{lindqvist1990}, we pospone it to the
Appendix.

%--------------------------------------------------------------------

\section{Behavior from the left of the first eigenvalue}
\label{sect:left}
\begin{prop}
\label{prop:ineq}
If $1<s<p<\infty$, it holds
\[
s\,\left(\underline{\lambda}_s^{(1)}\right)^{1/s} \leq
s\,\left(\lambda_s^{(1)}\right)^{1/s} < 
p\,\left(\underline{\lambda}_p^{(1)}\right)^{1/p} \leq
p\,\left(\lambda_p^{(1)}\right)^{1/p} \,. 
\]
\end{prop}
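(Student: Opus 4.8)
The plan is to isolate the only nontrivial point, the strict middle inequality, since the two outer ones have already been observed: by Proposition~\ref{prop:wp-}$(a)$ we have $W^{1,r}_0(\Omega)\subseteq W^{1,r_-}_0(\Omega)$, so passing to the infimum over a larger set yields $\underline{\lambda}_r^{(1)}\le\lambda_r^{(1)}$ for $r=s$ and for $r=p$, which are exactly the first and last inequalities.

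To obtain $s\,(\lambda_s^{(1)})^{1/s}\le p\,(\underline{\lambda}_p^{(1)})^{1/p}$, I would take a minimizer $v\in W^{1,p_-}_0(\Omega)$ for $\underline{\lambda}_p^{(1)}$, so that $v\ge0$, $\int_\Omega v^p\,dx=1$ and $\int_\Omega|\nabla v|^p\,dx=\underline{\lambda}_p^{(1)}$ (such $v$ exists by the remarks preceding Theorem~\ref{thm:lambda1}, and if $\Omega$ is connected one may take $v=\underline{u}_p\in L^\infty(\Omega)$ as in Theorem~\ref{thm:lambda1}). Since $s<p$, by definition $W^{1,p_-}_0(\Omega)\subseteq W^{1,s}_0(\Omega)$, so $v\in W^{1,s}_0(\Omega)$; as $v\in W^{1,p}(\Omega)$ as well, the function $w:=v^{p/s}$ lies in $W^{1,s}_0(\Omega)\setminus\{0\}$ with $\nabla w=\frac ps\,v^{p/s-1}\nabla v$ (immediate from the Lipschitz chain rule when $v$ is bounded, and by a routine truncation in general). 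Then $\int_\Omega w^s\,dx=\int_\Omega v^p\,dx=1$ and, by Hölder's inequality with exponents $p/s$ and $p/(p-s)$,
\[
\int_\Omega|\nabla w|^s\,dx
=\left(\frac ps\right)^{s}\int_\Omega v^{p-s}|\nabla v|^s\,dx
\le\left(\frac ps\right)^{s}\left(\int_\Omega|\nabla v|^p\,dx\right)^{s/p}\left(\int_\Omega v^p\,dx\right)^{(p-s)/p}
=\left(\frac ps\right)^{s}\bigl(\underline{\lambda}_p^{(1)}\bigr)^{s/p};
\]
using $w$ as a test function in the definition of $\lambda_s^{(1)}$ and taking $s$-th roots gives the non-strict inequality.

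The heart of the matter is to exclude equality. If $s\,(\lambda_s^{(1)})^{1/s}=p\,(\underline{\lambda}_p^{(1)})^{1/p}$, then the Hölder inequality above is an equality, which forces $|\nabla v|^p$ and $v^p$ to be proportional a.e.; since $v\not\equiv0$, this means $|\nabla v|=k\,v$ a.e.\ in $\Omega$ for some constant $k\ge0$. This is where the homogeneous boundary condition enters: by Proposition~\ref{prop:wp-}$(b)$ the extension $\hat v$ of $v$ by $0$ belongs to $W^{1,p}(\R^N)$, and it is nonnegative, has compact support, and satisfies $|\nabla\hat v|=k\,\hat v$ a.e.\ in $\R^N$ (on $\Omega$ this is the identity just obtained, and $\nabla\hat v=0$ a.e.\ on $\{\hat v=0\}\supseteq\R^N\setminus\Omega$). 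Fixing a direction $e\in\R^N$ and using absolute continuity on lines, for a.e.\ line $\ell$ parallel to $e$ the restriction $g$ of $\hat v$ to $\ell$ is, in its good representative, absolutely continuous on $\R$, nonnegative, compactly supported and satisfies $|g'|\le k\,g$ a.e.; hence $e^{kt}g(t)$ is nondecreasing, and being nonnegative and eventually $0$ it vanishes identically, i.e.\ $g\equiv0$. By Fubini $\hat v=0$ a.e., so $\int_\Omega v^p\,dx=0$, contradicting $\int_\Omega v^p\,dx=1$. Therefore the middle inequality is strict.

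I expect this last step to be the only real obstacle: the non-strict inequality is just a rescaling-and-Hölder computation, whereas ruling out equality requires both recognizing the rigidity it would impose ($|\nabla v|=k\,v$) and then cashing in the vanishing of $v$ at the boundary — available precisely in the form ``$\hat v\in W^{1,p}(\R^N)$'' through Proposition~\ref{prop:wp-}$(b)$ — via the Gr\"onwall-type argument on one-dimensional slices to force $v\equiv0$.
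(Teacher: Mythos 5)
Your proof is correct, and on the only delicate point --- the strict middle inequality --- it takes a genuinely different route from the paper. The non-strict estimate is obtained in both cases by the same device (the substitution $v\mapsto |v|^{p/s}$, truncation, and H\"older), the paper running it for an arbitrary $u\in W^{1,p_-}_0(\Omega)$ and an intermediate exponent $t\in(s,p)$ rather than for a minimizer of $\underline{\lambda}_p^{(1)}$. For strictness the paper does not analyze any equality case: it proves $t\,(\lambda_t^{(1)})^{1/t}\le p\,(\underline{\lambda}_p^{(1)})^{1/p}$ for $s<t<p$ and then quotes the known strict monotonicity $s\,(\lambda_s^{(1)})^{1/s}<t\,(\lambda_t^{(1)})^{1/t}$ from \cite{lindqvist1993} (Remark p.~204), valid without connectedness. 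You instead make the argument self-contained: equality would force equality in H\"older for the minimizer $v$, hence $|\nabla v|=k\,v$ a.e.\ in $\Omega$, and the homogeneous boundary condition --- used exactly in the form $\hat v\in W^{1,p}(\R^N)$ provided by Proposition~\ref{prop:wp-}$(b)$ --- then kills $v$ through the ACL/Gr\"onwall argument on lines, contradicting $\int_\Omega v^p\,dx=1$ (the case $k=0$ is handled the same way, though it cannot occur since $\underline{\lambda}_p^{(1)}>0$). The technical points you flag (that $v^{p/s}\in W^{1,s}_0(\Omega)$ with the exact chain-rule identity, via truncation, and the reduction to a.e.\ lines with compactly supported absolutely continuous restrictions) are routine and correctly handled. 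In comparison, the paper's route is shorter given the literature and works directly at the level of the eigenvalues, while yours avoids both the intermediate exponent and the external citation, in effect reproving the rigidity underlying Lindqvist's remark, now at the level of the relaxed quantity $\underline{\lambda}_p^{(1)}$.
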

\begin{proof}
Let $s<t<p$.
For every $u\in W^{1,p_-}_0(\Omega)\cap L^\infty(\Omega)$, we have
$|u|^{p/t}\in W^{1,t}_0(\Omega)$, hence
\begin{multline*}
\lambda_t^{(1)} \int_\Omega |u|^p\,dx \leq
\int_\Omega \left|\nabla |u|^{p/t}\right|^t\,dx
=
\left(\frac{p}{t}\right)^t\int_\Omega
|u|^{p-t}|\nabla u|^t\,dx \\
\leq
\left(\frac{p}{t}\right)^t
\left(\int_\Omega |u|^p\,dx\right)^{\frac{p-t}{p}}
\left(\int_\Omega |\nabla u|^p\,dx
\right)^{\frac{t}{p}} \,.
\end{multline*}
It follows
\[
\lambda_t^{(1)} 
\left(\int_\Omega |u|^p\,dx
\right)^{\frac{t}{p}}
\leq
\left(\frac{p}{t}\right)^t
\left(\int_\Omega |\nabla u|^p\,dx
\right)^{\frac{t}{p}} 
\qquad\forall u\in W^{1,p_-}_0(\Omega)\cap L^\infty(\Omega)\,.
\]
For every $u\in W^{1,p_-}_0(\Omega)$, the function
\[
v_k=\max\{\min\{u,-k\},k\}
\]
belongs to $W^{1,p_-}_0(\Omega)\cap L^\infty(\Omega)$
and is convergent to $u$ in $W^{1,p}(\Omega)$.
It follows
\[
\lambda_t^{(1)} 
\left(\int_\Omega |u|^p\,dx
\right)^{\frac{t}{p}}
\leq
\left(\frac{p}{t}\right)^t
\left(\int_\Omega |\nabla u|^p\,dx
\right)^{\frac{t}{p}} 
\qquad\forall u\in W^{1,p_-}_0(\Omega)\,,
\]
whence
\[
\lambda_t^{(1)} 
\leq
\left(\frac{p}{t}\right)^t
\left(\underline{\lambda}_p^{(1)}\right)^{\frac{t}{p}} \,.
\]
On the other hand, in~\cite[Remark~p.~204]{lindqvist1993}
it is proved that
\[
s\,\left(\lambda_s^{(1)}\right)^{1/s} <
t\,\left(\lambda_t^{(1)}\right)^{1/t} 
\]
and the argument does not require $\Omega$ to be connected.
We conclude that
\[
s\,\left(\lambda_s^{(1)}\right)^{1/s} < 
p\,\left(\underline{\lambda}_p^{(1)}\right)^{1/p} \,. 
\]
For the sake of completeness we have also recalled
the other (easy) inequalities.
\end{proof}
Now we can prove the main result of this section.
\begin{thm}
\label{thm:limlambda}
If $1<p<\infty$, it holds
\[
\lim_{s\to p^-} \underline{\lambda}_s^{(1)} =
\lim_{s\to p^-} \lambda_s^{(1)} =
\underline{\lambda}_p^{(1)} \,.
\]
If $\Omega$ is connected, we also have
\[
\lim_{s\to p^-}\,
\int_\Omega |\nabla\underline{u}_s - \nabla\underline{u}_p|^s\,dx
=
\lim_{s\to p^-}\,
\int_\Omega |\nabla u_s - \nabla\underline{u}_p|^s\,dx = 0\,.
\]
\end{thm}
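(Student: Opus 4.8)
The plan is to get the limit of the eigenvalues by sandwiching them between quantities that both converge to $\underline{\lambda}_p^{(1)}$, and then to upgrade this to convergence of the eigenfunctions via lower semicontinuity of the Dirichlet integral together with a compactness argument. First I would record the elementary monotonicity: from Proposition~\ref{prop:ineq}, the map $s \mapsto s\,(\lambda_s^{(1)})^{1/s}$ is strictly increasing on $(1,p)$ and is bounded above by $p\,(\underline{\lambda}_p^{(1)})^{1/p}$, and likewise $s\mapsto s\,(\underline{\lambda}_s^{(1)})^{1/s}\le s\,(\lambda_s^{(1)})^{1/s}$. Hence $\limsup_{s\to p^-} s\,(\lambda_s^{(1)})^{1/s} \le p\,(\underline{\lambda}_p^{(1)})^{1/p}$, giving $\limsup_{s\to p^-} \lambda_s^{(1)} \le \underline{\lambda}_p^{(1)}$. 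For the reverse inequality I would fix the minimizer $\underline{u}_p$ from Theorem~\ref{thm:lambda1}(a) (or just any $u \in W^{1,p_-}_0(\Omega)$) and, using Proposition~\ref{prop:wp-}(b), note $u$ extends by zero to $\hat u \in W^{1,p}(\R^N)$, so $u \in W^{1,s}_0(\Omega)$ for every $s<p$; plugging $u$ into the Rayleigh quotient for $\lambda_s^{(1)}$ and letting $s\to p^-$ (dominated convergence, since $|\nabla u|^s \le |\nabla u|^p + 1$ and similarly for $|u|^s$) yields $\limsup_{s\to p^-} \lambda_s^{(1)} \le \int_\Omega |\nabla u|^p / \int_\Omega |u|^p$; taking the infimum over such $u$ gives $\limsup_{s\to p^-} \lambda_s^{(1)} \le \underline{\lambda}_p^{(1)}$ again — so I actually need a genuine liminf bound, which is the crux, handled below. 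Sandwiching with $\underline{\lambda}_s^{(1)} \le \lambda_s^{(1)}$ then forces all three quantities to the common limit.

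The heart of the matter — and the main obstacle — is showing $\liminf_{s\to p^-}\underline{\lambda}_s^{(1)} \ge \underline{\lambda}_p^{(1)}$, equivalently that a sequence of near-minimizers for $\underline{\lambda}_s^{(1)}$ cannot do better in the limit than the minimizer over the larger space $W^{1,p_-}_0(\Omega)$. Take $s_n \to p^-$ and let $v_n = \underline{u}_{s_n} \in W^{1,(s_n)_-}_0(\Omega)$ realize $\underline{\lambda}_{s_n}^{(1)}$, with $\int_\Omega v_n^{s_n}\,dx = 1$ and $\int_\Omega |\nabla v_n|^{s_n}\,dx = \underline{\lambda}_{s_n}^{(1)}$, which by the already-established upper bound is bounded, say $\le C$, for $s_n$ close to $p$. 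Fix any $q \in (1,p)$; for $s_n > q$, Hölder gives $\int_\Omega |\nabla v_n|^q\,dx \le C^{q/s_n}\,\mathcal{L}^N(\Omega)^{1-q/s_n}$, so $(v_n)$ is bounded in $W^{1,q}(\Omega)$ and in fact $(\hat v_n)$ is bounded in $W^{1,q}(\R^N)$ by Proposition~\ref{prop:wp-}(b); up to a subsequence $\hat v_n \rightharpoonup \hat v$ weakly in $W^{1,q}(\R^N)$ and $v_n \to v$ strongly in $L^q(\Omega)$, with $v \ge 0$ a.e. and $v = 0$ on $\R^N\setminus\Omega$. A diagonal argument over $q \uparrow p$ shows $\hat v \in W^{1,p}(\R^N)$ with $\int_\Omega |\nabla v|^p\,dx \le \liminf_n \int_\Omega |\nabla v_n|^{s_n}\,dx = \liminf_n \underline{\lambda}_{s_n}^{(1)}$ — here one uses the weak lower semicontinuity of $u \mapsto \int |\nabla u|^q$ together with $\int |\nabla v_n|^q \le (\int |\nabla v_n|^{s_n})^{q/s_n}\mathcal{L}^N(\Omega)^{1-q/s_n}$ and then $q\to p$. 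Since $\hat v \in W^{1,p}(\R^N)$ vanishes off $\Omega$ it lies in $W^{1,p_-}_0(\Omega)$; and $\int_\Omega v^p\,dx = \lim_n \int_\Omega v_n^{s_n}\,dx = 1$ follows from $L^q$-convergence plus the uniform $L^p$-bound on $v_n$ (interpolation/Vitali). Therefore $v$ is admissible for $\underline{\lambda}_p^{(1)}$, giving $\underline{\lambda}_p^{(1)} \le \int_\Omega|\nabla v|^p \le \liminf_n \underline{\lambda}_{s_n}^{(1)}$, which closes the sandwich.

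Finally, for the eigenfunction convergence (assuming $\Omega$ connected), the argument above produces, from any sequence $s_n\to p^-$, a subsequence along which $\hat{\underline u}_{s_n} \rightharpoonup \hat v$ in $W^{1,q}(\R^N)$ for every $q<p$ with $v$ a minimizer for $\underline{\lambda}_p^{(1)}$ satisfying the normalization; by the uniqueness in Theorem~\ref{thm:lambda1}(a), $v = \underline{u}_p$, so the full family converges. To get the strong convergence $\int_\Omega |\nabla \underline{u}_s - \nabla \underline{u}_p|^s\,dx \to 0$, I would combine weak convergence with convergence of norms: $\int_\Omega |\nabla \underline{u}_s|^s\,dx = \underline{\lambda}_s^{(1)} \to \underline{\lambda}_p^{(1)} = \int_\Omega |\nabla \underline{u}_p|^p\,dx$, and in a uniformly convex setting (or via the standard Clarkson-type inequalities applied along a fixed exponent $q<p$ and then letting $q\to p$, controlling the defect with the uniform $L^p$-bound) norm convergence plus weak convergence yields strong convergence. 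The same reasoning applies verbatim to $u_s = \underline{u}_s$ when $u_s \in W^{1,s}_0(\Omega)$ coincides with the $W^{1,s_-}_0(\Omega)$-minimizer — but in general $u_s \ne \underline{u}_s$; there, I would instead use that $u_s$ is a near-minimizer for $\underline{\lambda}_s^{(1)}$ since $\lambda_s^{(1)} \to \underline{\lambda}_p^{(1)}$ as well, run the identical compactness argument on $\hat u_s$, and invoke uniqueness again to identify the limit as $\underline{u}_p$, whence $\int_\Omega |\nabla u_s - \nabla \underline{u}_p|^s\,dx \to 0$. The delicate points throughout are the passage $q\to p$ in the lower-semicontinuity estimate and the justification of $\int_\Omega v^p = 1$ in the limit, both of which rest on the \emph{uniform} bound $\int_\Omega |\nabla \underline{u}_s|^s \le C$ near $s=p$ furnished by Proposition~\ref{prop:ineq}.
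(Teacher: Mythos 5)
Your treatment of the eigenvalue limits follows essentially the paper's own route: upper bound from Proposition~\ref{prop:ineq}, compactness of the normalized minimizers $\underline{u}_{s_n}$ in $W^{1,q}$ for each fixed $q<p$, lower semicontinuity at exponent $q$ combined with H\"older and the passage $q\to p$, admissibility of the limit for $\underline{\lambda}_p^{(1)}$, and identification of the limit via the uniqueness in Theorem~\ref{thm:lambda1}$(a)$. One step, however, is misjustified: you conclude $v\in W^{1,p_-}_0(\Omega)$ from ``$\hat v\in W^{1,p}(\R^N)$ vanishes off $\Omega$''. That implication is false in general: take $\Omega$ a ball minus a compact piece $K$ of a hyperplane and a smooth cutoff equal to $1$ near $K$; its zero extension lies in $W^{1,p}(\R^N)$ and vanishes a.e.\ off $\Omega$, yet it belongs to no $W^{1,s}_0(\Omega)$, since $K$ has positive $s$-capacity for every $s>1$. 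This is precisely the subtlety the space $W^{1,p_-}_0(\Omega)$ is built around. The conclusion you need is nonetheless available from your own construction: $v_n\in W^{1,q}_0(\Omega)$ for $q<s_n$ and $v_n\rightharpoonup v$ in $W^{1,q}(\Omega)$, so $v\in W^{1,q}_0(\Omega)$ by weak closedness of that subspace, for every $q<p$; together with $v\in W^{1,p}(\Omega)$ this gives $v\in W^{1,p_-}_0(\Omega)$, which is exactly how the paper argues (weak limits in $W^{1,t}_0(\Omega)$).

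The genuine gap is the strong convergence $\int_\Omega|\nabla\underline{u}_s-\nabla\underline{u}_p|^s\,dx\to0$ (and likewise for $u_s$). ``Weak convergence plus convergence of norms'' does not apply directly because the exponent varies with $s$, and the remedy you sketch --- Radon--Riesz/Clarkson at a fixed exponent $q<p$, then $q\to p$ --- does not close: you do not know $\|\nabla\underline{u}_s\|_{L^q}\to\|\nabla\underline{u}_p\|_{L^q}$ for fixed $q$. H\"older only yields $\limsup_{s\to p^-}\|\nabla\underline{u}_s\|_{L^q}\le \mathcal{L}^N(\Omega)^{1/q-1/p}\bigl(\underline{\lambda}_p^{(1)}\bigr)^{1/p}$, which in general strictly exceeds $\|\nabla\underline{u}_p\|_{L^q}$ (equality would force $|\nabla\underline{u}_p|$ to be constant), so norm convergence at level $q$ is unavailable. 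The missing idea, used in the paper, is to exploit minimality at the varying exponent itself: $(\underline{u}_s+\underline{u}_p)/2\in W^{1,s_-}_0(\Omega)$ and $\int_\Omega|(\underline{u}_s+\underline{u}_p)/2|^s\,dx\to1$, so the Rayleigh-quotient bound gives $\liminf_{s\to p^-}\int_\Omega|\nabla(\underline{u}_s+\underline{u}_p)/2|^s\,dx\ge\underline{\lambda}_p^{(1)}$; Clarkson's inequality at exponent $s$ (in its two forms, for $s>2$ and $s\le2$), combined with $\int_\Omega|\nabla\underline{u}_s|^s\,dx=\underline{\lambda}_s^{(1)}\to\underline{\lambda}_p^{(1)}$ and $\int_\Omega|\nabla\underline{u}_p|^s\,dx\to\underline{\lambda}_p^{(1)}$, then forces the difference term to vanish, and the same device handles $u_s$. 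Without this (or an equivalent) step, the second half of the theorem is not proved in your proposal.
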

\begin{proof}
By Proposition~\ref{prop:ineq} it is clear that
\[
\lim_{s\to p^-} \underline{\lambda}_s^{(1)} \leq
\lim_{s\to p^-} \lambda_s^{(1)} \leq
\underline{\lambda}_p^{(1)} \,.
\]
Let $(p_k)$ be a sequence strictly increasing to $p$,
let $v_k\in W^{1,({p_k})_-}_0(\Omega)$ with
\[
v_k\geq 0\quad\text{a.e. in $\Omega$}\,,\qquad
\int_\Omega v_k^{p_k}\,dx = 1\,,\qquad
\int_\Omega |\nabla v_k|^{p_k}\,dx =  
\underline{\lambda}_{p_k}^{(1)}
\]
and let $1<t<p$.
In particular, it holds
\[
\lim_{k\to\infty}\,
\int_\Omega |\nabla v_k|^{p_k}\,dx <+\infty\,.
\]
Up to a subsequence, we have $p_k>t$ and
$(v_k)$ is
weakly convergent to some $u$ in 
$W^{1,t}_0(\Omega)$.
Since the sequence $(v_k)$ is
eventually bounded in $W^{1,s}_0(\Omega)$ for any
$s<p$, it follows
\[
u\in \bigcap_{1<s<p} W^{1,s}_0(\Omega)\,.
\]
Moreover, it holds
\[
\text{$u\geq 0$ a.e. in $\Omega$}\,,\qquad
\int_\Omega u^p\,dx = 1
\]
and, for every $s<p$,
\begin{alignat*}{3}
&\int_\Omega |\nabla u|^s\,dx &&\leq
\liminf_{k\to\infty}
\int_\Omega |\nabla v_k|^s\,dx \leq
\liminf_{k\to\infty}
\left[\mathcal{L}^N(\Omega)^{1 - \frac{s}{p_k}}\,
\left(\int_\Omega |\nabla v_k|^{p_k}\,dx
\right)^{\frac{s}{p_k}}\right] \\
&&&=
\lim_{k\to\infty}
\left[\mathcal{L}^N(\Omega)^{1 - \frac{s}{p_k}}\,
\left(\underline{\lambda}_{p_k}^{(1)}\right)^{\frac{s}{p_k}}
\right] =
\mathcal{L}^N(\Omega)^{1 - \frac{s}{p}}\,
\left(\lim_{k\to\infty}
\underline{\lambda}_{p_k}^{(1)}\right)^{\frac{s}{p}} \,.
\end{alignat*}
By the arbitrariness of $s$, we infer that
$u\in W^{1,p}(\Omega)$, hence $u\in W^{1,p_-}_0(\Omega)$, with
\[
\underline{\lambda}_{p}^{(1)} \leq
\int_\Omega |\nabla u|^p\,dx \leq
\lim_{k\to\infty}
\underline{\lambda}_{p_k}^{(1)} \,.
\]
It follows
\[
\lim_{s\to p^-} \underline{\lambda}_s^{(1)} =
\lim_{s\to p^-} \lambda_s^{(1)} =
\underline{\lambda}_p^{(1)} =
\int_\Omega |\nabla u|^p\,dx\,.
\]
Now assume that $\Omega$ is connected.
From~$(a)$ of Theorem~\ref{thm:lambda1}, we infer that
$v_k=\underline{u}_{p_k}$, $u=\underline{u}_p$ and 
\[
\lim_{s\to p^-} \underline{u}_s = \underline{u}_p
\qquad\text{weakly in $W^{1,t}_0(\Omega)$ 
for any $t<p$}\,.
\]
In particular, it holds
\[
\lim_{s\to p^-} 
\int_\Omega \left|\frac{\underline{u}_s +
\underline{u}_p}{2}\right|^s\,dx = 1\,,
\]
whence
\[
\liminf_{s\to p^-}
\int_\Omega \left|\frac{\nabla \underline{u}_s +
\nabla\underline{u}_p}{2}\right|^s\,dx \geq
\underline{\lambda}_p\,.
\]
If $2<s<p$, Clarkson's inequality yields
\[
\int_\Omega \left|\frac{\nabla \underline{u}_s +
\nabla\underline{u}_p}{2}\right|^s\,dx +
\int_\Omega \left|\frac{\nabla \underline{u}_s -
\nabla\underline{u}_p}{2}\right|^s\,dx \leq
\frac{1}{2}
\int_\Omega \left|\nabla \underline{u}_s\right|^s\,dx +
\frac{1}{2}
\int_\Omega \left|\nabla \underline{u}_p\right|^s\,dx\,,
\]
whence
\[
\lim_{s\to p^-} 
\int_\Omega \left|\nabla \underline{u}_s -
\nabla\underline{u}_p\right|^s\,dx = 0
\qquad\text{if $p>2$}\,.
\]
When $1<s<p\leq 2$, Clarkson's inequality becomes
\begin{multline*}
\left(\int_\Omega \left|\frac{\nabla \underline{u}_s +
\nabla\underline{u}_p}{2}\right|^s\,dx\right)^{\frac{1}{s-1}} +
\left(\int_\Omega \left|\frac{\nabla \underline{u}_s -
\nabla\underline{u}_p}{2}\right|^s\,dx\right)^{\frac{1}{s-1}} 
\\ \leq
\left(\frac{1}{2}
\int_\Omega \left|\nabla \underline{u}_s\right|^s\,dx +
\frac{1}{2}
\int_\Omega \left|\nabla \underline{u}_p\right|^s\,dx
\right)^{\frac{1}{s-1}}\,,
\end{multline*}
but the argument is the same.
\par
Finally, in a similar way one can prove that
\[
\lim_{s\to p^-} 
\int_\Omega \left|\nabla u_s -
\nabla\underline{u}_p\right|^s\,dx = 0\,.
\]
\end{proof}
\begin{rem}
\label{rem:diff}
Since the paper~\cite{lindqvist1993} contains a counterexample with 
\[
\lim_{s\to p^-} \lambda_s^{(1)} < \lambda_p^{(1)} \,,
\]
in that case we have 
$\underline{\lambda}_p^{(1)} < \lambda_p^{(1)}$, hence
$W^{1,p_-}_0(\Omega)\neq W^{1,p}_0(\Omega)$.
\end{rem}
Now we aim also to describe the behavior as $s\to p^-$ in the terms
of the variational convergence 
of~\cite{attouch1984, dalmaso1993}.
\begin{defn}
Let $X$ be a metrizable topological space,
$f:X\rightarrow[-\infty,+\infty]$ a function and
let $(f_h)$ be a sequence of functions from $X$ 
to~$[-\infty,+\infty]$.
According to~\cite[Proposition~8.1]{dalmaso1993}, we say that
$(f_h)$ is $\Gamma$-convergent to $f$ and we write
\[
\Gamma-\lim_{h\to\infty} f_h = f\,, 
\]
if the following facts hold:
\item[$(a)$]
for every $u\in X$ and every sequence $(u_h)$ converging to 
$u$ in $X$ it holds
\[
\liminf_{h\to\infty} f_h(u_h) \geq f(u)\,;
\]
\item[$(b)$]
for every $u\in X$ there exists a sequence $(u_h)$
converging to $u$ in $X$ such that
\[
\lim_{h\to\infty} f_h(u_h) = f(u)\,.
\]
\end{defn}
If $1<p<\infty$, we define two functionals
$\mathcal{E}_p, 
\underline{\mathcal{E}}_p:L^1_{loc}(\Omega)\rightarrow[0,+\infty]$
as
\begin{alignat*}{3}
&\mathcal{E}_p(u) &&=
\begin{cases}
\displaystyle{
\left(\int_\Omega|\nabla u|^p\,dx\right)^{1/p}}
&\qquad\text{if $u\in W^{1,p}_0(\Omega)$}\,,\\
\noalign{\medskip}
+\infty
&\qquad\text{otherwise}\,,
\end{cases}\\
&\underline{\mathcal{E}}_p(u) &&=
\begin{cases}
\displaystyle{
\left(\int_\Omega|\nabla u|^p\,dx\right)^{1/p}}
&\qquad\text{if $u\in W^{1,p_-}_0(\Omega)$}\,,\\
\noalign{\medskip}
+\infty
&\qquad\text{otherwise}\,.
\end{cases}
\end{alignat*}
\begin{thm}
\label{thm:gammal}
For every sequence $(p_h)$ strictly increasing to $p$, with
$1<p<\infty$, it holds
\[
\Gamma-\lim_{h\to\infty} \underline{\mathcal{E}}_{p_h} = 
\Gamma-\lim_{h\to\infty} \mathcal{E}_{p_h} = 
\underline{\mathcal{E}}_p\,.
\]
\end{thm}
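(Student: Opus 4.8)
The plan is to prove the two equalities by reducing them, via the elementary comparison $\underline{\mathcal{E}}_{p_h}\le\mathcal{E}_{p_h}$ on all of $L^1_{loc}(\Omega)$, to a single $\Gamma$-liminf inequality and a single recovery sequence. The comparison holds because $W^{1,p_h}_0(\Omega)\subseteq W^{1,(p_h)_-}_0(\Omega)$ and the two functionals are given by the same integral wherever both are finite. Consequently: if $u_h\to u$ in $L^1_{loc}(\Omega)$ then $\liminf_h\mathcal{E}_{p_h}(u_h)\ge\liminf_h\underline{\mathcal{E}}_{p_h}(u_h)$, so a $\Gamma$-liminf inequality for the sequence $(\underline{\mathcal{E}}_{p_h})$ automatically gives one for $(\mathcal{E}_{p_h})$; and any recovery sequence for $\mathcal{E}_{p_h}$ at a point $u$ is, a fortiori, a recovery sequence for $\underline{\mathcal{E}}_{p_h}$ at $u$. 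Hence it suffices to prove (i) the $\Gamma$-liminf inequality for $(\underline{\mathcal{E}}_{p_h})$ towards $\underline{\mathcal{E}}_p$, and (ii) the existence, for each $u$, of a sequence $u_h\to u$ in $L^1_{loc}(\Omega)$ with $\limsup_h\mathcal{E}_{p_h}(u_h)\le\underline{\mathcal{E}}_p(u)$.

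The recovery sequence is essentially trivial. If $\underline{\mathcal{E}}_p(u)=+\infty$, take $u_h=u$. If $u\in W^{1,p_-}_0(\Omega)$, then, since $p_h<p$, the very definition of $W^{1,p_-}_0(\Omega)$ as an intersection over $1<s<p$ (cf.\ Proposition~\ref{prop:wp-}$(a)$) gives $u\in W^{1,p_h}_0(\Omega)$, so again the constant sequence $u_h=u$ is admissible and $\mathcal{E}_{p_h}(u)=\bigl(\int_\Omega|\nabla u|^{p_h}\,dx\bigr)^{1/p_h}$. By Hölder's inequality $\int_\Omega|\nabla u|^{p_h}\,dx\le\mathcal{L}^N(\Omega)^{1-p_h/p}\bigl(\int_\Omega|\nabla u|^p\,dx\bigr)^{p_h/p}$, so $\mathcal{E}_{p_h}(u)\le\mathcal{L}^N(\Omega)^{1/p_h-1/p}\bigl(\int_\Omega|\nabla u|^p\,dx\bigr)^{1/p}$, and since $1/p_h-1/p\to0$ we conclude $\limsup_h\mathcal{E}_{p_h}(u)\le\underline{\mathcal{E}}_p(u)$.

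For the $\Gamma$-liminf inequality, let $u_h\to u$ in $L^1_{loc}(\Omega)$ and put $L=\liminf_h\underline{\mathcal{E}}_{p_h}(u_h)$; we may assume $L<\infty$ and, passing to a subsequence, that $\underline{\mathcal{E}}_{p_h}(u_h)\to L$ with $u_h\in W^{1,(p_h)_-}_0(\Omega)$. Fix $1<s<p$. For $h$ large we have $p_h>s$, hence $u_h\in W^{1,s}_0(\Omega)$, and by Hölder $\int_\Omega|\nabla u_h|^s\,dx\le\mathcal{L}^N(\Omega)^{1-s/p_h}\bigl(\int_\Omega|\nabla u_h|^{p_h}\,dx\bigr)^{s/p_h}$, which is bounded in $h$; together with the Poincaré inequality in $W^{1,s}_0(\Omega)$ (valid for bounded $\Omega$, with constant that can be taken independent of $s$), $(u_h)$ is bounded in $W^{1,s}_0(\Omega)$. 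By reflexivity, a further subsequence converges weakly to $u$ in $W^{1,s}_0(\Omega)$ — the weak limit being identified with $u$ through the $L^1_{loc}$ convergence — so $u\in W^{1,s}_0(\Omega)$, and by weak lower semicontinuity together with the passage to the limit in the Hölder bound, $\int_\Omega|\nabla u|^s\,dx\le\mathcal{L}^N(\Omega)^{1-s/p}L^s$. Since this holds for every $s<p$, letting $s\to p^-$ along a sequence and applying Fatou's lemma gives $\int_\Omega|\nabla u|^p\,dx\le L^p$; then, arguing exactly as in the proof of Proposition~\ref{prop:wp-}$(b)$ (the extension $\hat u$ lies in $W^{1,s}(\R^N)$ for every $s<p$ and has $L^p$-gradient, hence lies in $W^{1,p}(\R^N)$), we obtain $u\in W^{1,p_-}_0(\Omega)$. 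Thus $\underline{\mathcal{E}}_p(u)=\bigl(\int_\Omega|\nabla u|^p\,dx\bigr)^{1/p}\le L$, which is the claimed inequality.

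The steps involving Hölder, Poincaré and weak lower semicontinuity are routine, and the recovery sequence is immediate; the only real point is the compactness-and-lower-semicontinuity argument as $s\to p^-$, namely showing that a function which a priori only lies in $\bigcap_{1<s<p}W^{1,s}_0(\Omega)$, with gradients bounded in every $L^s$ for $s<p$, actually belongs to $W^{1,p_-}_0(\Omega)$ with the correct energy bound. This is exactly the mechanism already used in the proof of Theorem~\ref{thm:limlambda}, so I do not expect any additional difficulty here.
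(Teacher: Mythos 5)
Your proof is correct, but it follows a genuinely different route from the paper's. The paper does not verify the two conditions in the definition of $\Gamma$-convergence by hand: it normalizes the functionals, setting $f_s=\mathcal{L}^N(\Omega)^{-1/s}\mathcal{E}_s$ and $\underline{f}_s=\mathcal{L}^N(\Omega)^{-1/s}\underline{\mathcal{E}}_s$, observes (via H\"older and the inclusions $W^{1,p_-}_0(\Omega)\subseteq W^{1,p_h}_0(\Omega)\subseteq W^{1,(p_h)_-}_0(\Omega)$) that the sequences $(f_{p_h})$ and $(\underline{f}_{p_h})$ are increasing, lower semicontinuous on $L^1_{loc}(\Omega)$ and pointwise convergent to $\underline{f}_p$, and then invokes \cite[Proposition~5.4]{dalmaso1993}, by which an increasing sequence of lower semicontinuous functionals $\Gamma$-converges to its pointwise limit. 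You instead reduce both equalities, via the pointwise comparison $\underline{\mathcal{E}}_{p_h}\le\mathcal{E}_{p_h}$, to one $\Gamma$-liminf inequality (for the smaller functionals) and one recovery sequence (for the larger ones, the constant sequence sufficing), and verify these directly; your liminf step is exactly the compactness-and-lower-semicontinuity mechanism of Theorem~\ref{thm:limlambda}. The paper's route is shorter and pushes the analytic content into the abstract monotone-convergence result (the lower semicontinuity of $f_s$, $\underline{f}_s$ there amounts to the estimate you carry out); yours is self-contained, needs no normalization trick, and makes the recovery sequence explicit.

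One step deserves a word. At the end of the liminf argument you cite the mechanism of Proposition~\ref{prop:wp-}$(b)$ to conclude $u\in W^{1,p_-}_0(\Omega)$, but that argument presupposes $u\in W^{1,p}(\Omega)$, whereas at that stage you only know $u\in\bigcap_{1<s<p}W^{1,s}_0(\Omega)$ and $\nabla u\in L^p(\Omega;\R^N)$: you still need $u\in L^p(\Omega)$. This is closed at once by the uniform-in-$s$ Poincar\'e inequality you already invoked, since $\int_\Omega|u|^s\,dx\le C^s\int_\Omega|\nabla u|^s\,dx\le C^s\,\mathcal{L}^N(\Omega)^{1-s/p}\left(\int_\Omega|\nabla u|^p\,dx\right)^{s/p}$ is bounded uniformly for $s<p$, and Fatou's lemma as $s\to p^-$ gives $u\in L^p(\Omega)$ (alternatively, use the Sobolev inequality in $W^{1,s}_0(\Omega)$ for $s$ close to $p$, as in Lemma~\ref{lem:pos}). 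So this is a minor imprecision to be patched, not a gap in the approach.
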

\begin{proof}
Define, whenever $1<s<\infty$,
$f_s, 
\underline{f}_s:L^1_{loc}(\Omega)\rightarrow[0,+\infty]$
as
\[
f_s(u) = \mathcal{L}^N(\Omega)^{-1/s}\,\mathcal{E}_s(u)\,,\qquad
\underline{f}_s(u) = \mathcal{L}^N(\Omega)^{-1/s}\,
\underline{\mathcal{E}}_s(u)\,.
\]
Then $f_s, \underline{f}_s$ are lower semicontinuous
and the sequences $(f_{p_h})$, $(\underline{f}_{p_h})$ are
both increasing and pointwise convergent
to $\underline{f}_p$.
From~\cite[Proposition~5.4]{dalmaso1993} we infer that
\[
\Gamma-\lim_{h\to\infty} \underline{f}_{p_h} = 
\Gamma-\lim_{h\to\infty} f_{p_h} = 
\underline{f}_p
\]
and the assertion easily follows.
\end{proof}
%

%--------------------------------------------------------------------

\section{Some characterizations}
\label{sect:char}
Without imposing any assumption on $\partial\Omega$, we aim
to characterize the fact that
\[
\lim_{s\to p^-} \lambda_s^{(1)} = \lambda_p^{(1)} \,.
\]
\begin{thm}
\label{thm:char}
If $1<p<\infty$ and $\Omega$ is connected, the following 
facts are equivalent:
\begin{itemize}
\item[$(a)$]
$\lim\limits_{s\to p^-} \lambda_s^{(1)} = \lambda_p^{(1)}$;
\item[$(b)$]
for every sequence $(p_h)$ strictly increasing to $p$, it holds
\[
\Gamma-\lim_{h\to\infty} \mathcal{E}_{p_h} = 
\mathcal{E}_p\,;
\]
\item[$(c)$]
$W^{1,p_-}_0(\Omega)=W^{1,p}_0(\Omega)$;
\item[$(d)$]
$\underline{\lambda}_p^{(1)} =\lambda_p^{(1)}$;
\item[$(e)$]
$\underline{u}_p =u_p$;
\item[$(f)$]
$\underline{u}_p \in W^{1,p}_0(\Omega)$;
\item[$(g)$]
the solution $u$ of
\[
\begin{cases}
u\in W^{1,p_-}_0(\Omega)\,,\\
\noalign{\medskip}
\displaystyle{
\int_\Omega |\nabla u|^{p-2}\nabla u\cdot\nabla v\,dx =
\int_\Omega v\,dx}
\qquad\forall v\in W^{1,p_-}_0(\Omega)\,,
\end{cases}
\]
given by the next Theorem~\ref{thm:mon}, belongs to $W^{1,p}_0(\Omega)$.
\end{itemize}
\end{thm}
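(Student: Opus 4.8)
The plan is to prove Theorem~\ref{thm:char} by establishing a cycle of implications. The natural structure is to connect the ``soft'' statements $(c)$, $(d)$, $(e)$, $(f)$, $(g)$ among themselves first, since these are essentially equivalent reformulations of the coincidence $W^{1,p_-}_0(\Omega)=W^{1,p}_0(\Omega)$, and then to tie in the ``limit'' statements $(a)$ and $(b)$ using Theorem~\ref{thm:limlambda} and Theorem~\ref{thm:gammal}.

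\medskip

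I would proceed as follows. First, $(c)\Rightarrow(b)$: if the two Sobolev spaces coincide, then $\mathcal{E}_p=\underline{\mathcal{E}}_p$, so Theorem~\ref{thm:gammal} gives $\Gamma\text{-}\lim \mathcal{E}_{p_h}=\underline{\mathcal{E}}_p=\mathcal{E}_p$. Next, $(b)\Rightarrow(a)$: the $\Gamma$-convergence of the $\mathcal{E}_{p_h}$ to $\mathcal{E}_p$, together with a standard compactness/equicoercivity argument (the sublevels are relatively compact in $L^1_{loc}(\Omega)$, or one normalizes in $L^s$ and uses that the $L^s$-norms are continuous along the sequence) forces convergence of the minima of the associated Rayleigh-type quotients, i.e. $\lambda_{p_h}^{(1)}\to\lambda_p^{(1)}$ for every sequence $p_h\uparrow p$, which is $(a)$. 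Then $(a)\Rightarrow(d)$: by Theorem~\ref{thm:limlambda} we always have $\lim_{s\to p^-}\lambda_s^{(1)}=\underline{\lambda}_p^{(1)}$, so $(a)$ says precisely $\underline{\lambda}_p^{(1)}=\lambda_p^{(1)}$. Now $(d)\Rightarrow(e)$: when $\underline{\lambda}_p^{(1)}=\lambda_p^{(1)}$, the minimizer $\underline{u}_p$ of the quotient over $W^{1,p_-}_0(\Omega)$ has the same Rayleigh quotient value $\lambda_p^{(1)}$; but $\underline{u}_p\geq 0$ and, by Theorem~\ref{thm:lambda1}$(a)$, $\underline{u}_p>0$ in $\Omega$, so $\underline{u}_p$ is a nonnegative eigenfunction for $\lambda_p^{(1)}$ on the larger space. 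Testing the Euler equation against functions in $W^{1,p}_0(\Omega)\subseteq W^{1,p_-}_0(\Omega)$, $\underline{u}_p$ is a nonnegative weak solution of \eqref{eq:main} once we know it lies in $W^{1,p}_0(\Omega)$; I would instead argue directly: since $\underline{u}_p$ achieves the infimum $\lambda_p^{(1)}$, and since the infimum over $W^{1,p}_0(\Omega)$ is achieved by the simple positive eigenfunction $u_p$, and since $W^{1,p_-}_0(\Omega)$ carries the equivalent norm $(\int|\nabla u|^p)^{1/p}$ making $\underline{\lambda}_p^{(1)}$ attained, equality of the two infima means the attained value on $W^{1,p}_0(\Omega)$ also equals $\underline{\lambda}_p^{(1)}$; uniqueness in Theorem~\ref{thm:lambda1}$(a)$ forces $\underline{u}_p\in W^{1,p}_0(\Omega)$ and $\underline{u}_p=u_p$. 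The implications $(e)\Rightarrow(f)$ and $(f)\Rightarrow(c)$ are then immediate: $(e)$ gives $\underline{u}_p=u_p\in W^{1,p}_0(\Omega)$, hence $(f)$; and if $\underline{u}_p\in W^{1,p}_0(\Omega)$, then in particular $\underline{\lambda}_p^{(1)}=\int_\Omega|\nabla\underline{u}_p|^p\,dx\geq\lambda_p^{(1)}$, so $(d)$ holds, and $(d)$ plus the density/closedness structure of Proposition~\ref{prop:wp-} — more precisely, the fact that under $(d)$ the auxiliary first eigenfunction lies in the smaller space and both spaces are closed subspaces of $W^{1,p}(\Omega)$ with the same first eigenvalue — yields $(c)$. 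Actually the cleanest route for $(f)\Rightarrow(c)$ is via $(d)$: once $\underline{\lambda}_p^{(1)}=\lambda_p^{(1)}$, suppose $u\in W^{1,p_-}_0(\Omega)\setminus W^{1,p}_0(\Omega)$; decompose or rescale to produce a competitor violating strict monotonicity, contradicting Proposition~\ref{prop:ineq}-type strictness — I expect instead to argue that $W^{1,p}_0(\Omega)$ is a closed subspace of the Hilbert-like (uniformly convex) space $W^{1,p_-}_0(\Omega)$ on which the first eigenvalue is the same, and invoke simplicity/uniqueness to rule out a strictly larger space; this is the step I will need to handle with some care.

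\medskip

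Finally, $(g)$ is folded in using Theorem~\ref{thm:mon} (the monotone operator result quoted next): the torsion-type problem with right-hand side $1$ has a unique solution $u\in W^{1,p_-}_0(\Omega)$. If $(c)$ holds, this solution automatically lies in $W^{1,p}_0(\Omega)$, giving $(c)\Rightarrow(g)$. Conversely, for $(g)\Rightarrow(c)$ I would use that the solution of the torsion problem on $W^{1,p_-}_0(\Omega)$ is strictly positive in $\Omega$ (by the same $C^1$-regularity and strong maximum principle as in Theorem~\ref{thm:lambda1}) and that, if it belongs to $W^{1,p}_0(\Omega)$, then for any $v\in W^{1,p_-}_0(\Omega)$ one can compare and conclude $v\in W^{1,p}_0(\Omega)$ — concretely, the map $v\mapsto \int_\Omega v\,dx$ extends from $W^{1,p}_0(\Omega)$ to $W^{1,p_-}_0(\Omega)$, and having the solution in the smaller space means the associated "capacity-type" functional does not see the difference between the two spaces, forcing them to coincide. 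A slicker alternative: show directly $(g)\Rightarrow(f)$ by testing the torsion equation against $\underline{u}_p$ and the eigenvalue equation against the torsion solution, exploiting that both are positive, and then fall back on the chain through $(f)$.

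\medskip

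The main obstacle I anticipate is the passage from an equality of first eigenvalues or from membership of a single distinguished function in $W^{1,p}_0(\Omega)$ back to the \emph{global} coincidence $W^{1,p_-}_0(\Omega)=W^{1,p}_0(\Omega)$, i.e. implications $(d)\Rightarrow(c)$ and $(g)\Rightarrow(c)$. The other links are essentially formal once Theorems~\ref{thm:limlambda}, \ref{thm:gammal} and~\ref{thm:lambda1} are in hand. For $(d)\Rightarrow(c)$ the key idea will be: given $w\in W^{1,p_-}_0(\Omega)$, its truncations $w_k$ lie in $W^{1,p_-}_0(\Omega)\cap L^\infty$ and $|w_k|^{p/t}\in W^{1,t}_0(\Omega)$ for $t<p$; combining the Rayleigh-quotient inequalities from the proof of Proposition~\ref{prop:ineq} with the sharp equality case forced by $(d)$ pins down enough structure (essentially that $w$ can be approximated in $W^{1,p}$-norm by $W^{1,p}_0(\Omega)$ functions) to conclude $w\in W^{1,p}_0(\Omega)$; I will need to verify that the approximation is in the $W^{1,p}$ topology and not merely weak, which is where uniform convexity of $W^{1,p}$ and the Clarkson-inequality trick already used in the proof of Theorem~\ref{thm:limlambda} will do the work.
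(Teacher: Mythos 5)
Your handling of the peripheral links is correct and essentially the paper's: $(a)\Leftrightarrow(d)$ is exactly Theorem~\ref{thm:limlambda}, $(c)\Rightarrow(b)$ follows from Theorem~\ref{thm:gammal}, and $(d)\Rightarrow(e)\Rightarrow(f)$ via the uniqueness statement in Theorem~\ref{thm:lambda1}$(a)$ is the argument used in the paper. (Incidentally, for $(b)$ you do not need the compactness/convergence-of-minima discussion at all: by Theorem~\ref{thm:gammal} the $\Gamma$-limit of $(\mathcal{E}_{p_h})$ is always $\underline{\mathcal{E}}_p$, and $\Gamma$-limits in the metrizable space $L^1_{loc}(\Omega)$ are unique, so $(b)$ is equivalent to $\mathcal{E}_p=\underline{\mathcal{E}}_p$, i.e.\ to $(c)$.)

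The genuine gap is the one you yourself flag: the return from the ``single-function'' statements to the global identity $(c)$, i.e.\ $(f)\Rightarrow(c)$ (or $(d)\Rightarrow(c)$, $(g)\Rightarrow(c)$). Neither of your sketches works as stated. Equality $\underline{\lambda}_p^{(1)}=\lambda_p^{(1)}$, or membership of one distinguished function in $W^{1,p}_0(\Omega)$, gives no direct handle on an arbitrary $w\in W^{1,p_-}_0(\Omega)$; the ``equality case in Proposition~\ref{prop:ineq} plus Clarkson/uniform convexity'' cannot manufacture a strong $W^{1,p}$-approximation of such a $w$ by $W^{1,p}_0(\Omega)$ functions, and ``simplicity rules out a strictly larger space'' is not a valid inference (two different closed subspaces can share their first eigenvalue; indeed the Remark after the theorem exhibits a disconnected $\Omega$ where $(d)$ holds and $(c)$ fails, so any proof must use connectedness through something stronger than simplicity). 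The paper closes the loop through the auxiliary problem of Theorem~\ref{thm:mon}, using three ingredients your proposal never isolates: (i) if $v\in W^{1,p}(\Omega)$, $V\in W^{1,p}_0(\Omega)$ and $0\le v\le V$ a.e., then $v\in W^{1,p}_0(\Omega)$; (ii) the comparison principle of Theorem~\ref{thm:mon}$(b)$ together with the positivity of $\underline{u}_p$ on all of $\Omega$: with data $z_k=\min\{\underline{\lambda}_p^{(1)}(k\underline{u}_p)^{p-1},1\}$ the solutions satisfy $0\le w_k\le k\underline{u}_p$, hence lie in $W^{1,p}_0(\Omega)$, and since $z_k\to 1$ in $L^p(\Omega)$ the continuity in Theorem~\ref{thm:mon}$(a)$ gives $(f)\Rightarrow(g)$; (iii) for $(g)\Rightarrow(c)$, comparison with multiples of the positive torsion function shows that every solution with $L^\infty$ data is in $W^{1,p}_0(\Omega)$, and a general $w\in W^{1,p_-}_0(\Omega)$ is then obtained as a strong $W^{1,p}$-limit of such solutions by representing $\int_\Omega|\nabla w|^{p-2}\nabla w\cdot\nabla v\,dx=\int_\Omega(zv+Z\cdot\nabla v)\,dx$ and approximating $(z,Z)$ by $C^\infty_c$ data, again via Theorem~\ref{thm:mon}$(a)$. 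Your alternative $(g)\Rightarrow(f)$ can in fact be salvaged by the same comparison device (since $\underline{u}_p\in L^\infty(\Omega)$, $0\le\underline{u}_p\le Mu$ for some $M$, so $\underline{u}_p\in W^{1,p}_0(\Omega)$), but without step (iii) your chain of implications never reaches $(c)$, so the equivalence as a whole remains unproved.
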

\begin{proof}
By Theorem~\ref{thm:limlambda} it is clear that
$(a)\Leftrightarrow(d)$, while we have $(b)\Leftrightarrow(c)$
by Theorem~\ref{thm:gammal}.
Now we consider the assertions from $(c)$ to~$(g)$.
\par
It is clear that $(c)\Rightarrow(d)$.
If $\underline{\lambda}_p^{(1)} =\lambda_p^{(1)}$,
we have that 
$u_p \in W^{1,p}_0(\Omega)\subseteq W^{1,p_-}_0(\Omega)$
satisfies
\[
\text{$u_p\geq 0$ a.e. in $\Omega$}\,,\qquad
\int_\Omega u_p^p\,dx = 1\,,
\qquad
\int_\Omega |\nabla u_p|^p\,dx
= \underline{\lambda}_p^{(1)} \,.
\]
From $(a)$ of Theorem~\ref{thm:lambda1} we infer that
$u_p=\underline{u}_p$.
Therefore $(d)\Rightarrow(e)$.
\par
Of course, $(e)\Rightarrow(f)$.
If $\underline{u}_p \in W^{1,p}_0(\Omega)$, let 
\[
z_k = \min\left\{
\underline{\lambda}_p^{(1)}(k \underline{u}_p)^{p-1},1\right\}
\]
and let $w_k\in W^{1,p_-}_0(\Omega)$ be the solution of
\[
\int_\Omega |\nabla w_k|^{p-2}\nabla w_k\cdot\nabla v\,dx
= \int_\Omega z_k v\,dx
\qquad\forall v\in W^{1,p_-}_0(\Omega)
\]
according to Theorem~\ref{thm:mon}.
Since $0\leq z_k\leq 
\underline{\lambda}_p^{(1)}(k \underline{u}_p)^{p-1}$
a.e. in $\Omega$, we have
$0\leq w_k\leq k \underline{u}_p$ a.e. in $\Omega$.
From $w_k\in W^{1,p}(\Omega)$ and 
$k \underline{u}_p\in W^{1,p}_0(\Omega)$, we infer that
$w_k\in W^{1,p}_0(\Omega)$.
\par
Since $(z_k)$ is convergent to $1$ in $L^p(\Omega)$, 
we also have
\[
\lim_{k\to\infty}
\int_\Omega |\nabla w_k - \nabla u|^p\,dx = 0\,,
\]
whence $u\in W^{1,p}_0(\Omega)$.
Therefore $(f)\Rightarrow(g)$.
\par
Finally, assume that~$(g)$ holds and let $u$ be
as in assertion~$(g)$.
If $z\in L^\infty(\Omega)$ and $w\in W^{1,p_-}_0(\Omega)$ 
is the solution of
\[
\int_\Omega |\nabla w|^{p-2}\nabla w\cdot\nabla v\,dx
= \int_\Omega z v\,dx
\qquad\forall v\in W^{1,p_-}_0(\Omega) \,,
\]
we have $-M^{p-1}\leq z\leq M^{p-1}$ for some $M>0$, whence
$-Mu \leq w\leq Mu$ a.e. in $\Omega$.
It follows $w\in W^{1,p}_0(\Omega)$.
\par
Now let $w\in W^{1,p_-}_0(\Omega)$.
Let $z\in L^p(\Omega)$ and $Z\in L^p(\Omega;\R^N)$
be such that
\[
\int_\Omega |\nabla w|^{p-2}\nabla w\cdot\nabla v\,dx =
\int_\Omega (zv+Z\cdot\nabla v)\,dx
\qquad\forall v\in W^{1,p}(\Omega)\,.
\]
Then let $(z_k)$ and $(Z_k)$ be two sequences in $C^\infty_c$
converging to $z$ and $Z$, respectively, in~$L^p$.
Since $(z_k-\mathrm{div}Z_k)\in L^\infty(\Omega)$,
there exists $w_k\in W^{1,p}_0(\Omega)$ such that
\[
\int_\Omega |\nabla w_k|^{p-2}\nabla w_k\cdot\nabla v\,dx
= \int_\Omega (z_k-\mathrm{div}Z_k) v\,dx
\qquad\forall v\in W^{1,p_-}_0(\Omega) \,.
\]
Since
\begin{alignat*}{3}
&\int_\Omega |\nabla w|^{p-2}\nabla w\cdot\nabla v\,dx &&=
\int_\Omega (zv+Z\cdot\nabla v)\,dx
&&\qquad\forall v\in W^{1,p_-}_0(\Omega)\,,\\
&\int_\Omega |\nabla w_k|^{p-2}\nabla w_k\cdot\nabla v\,dx
&&= \int_\Omega (z_kv+Z_k\cdot\nabla v)\,dx
&&\qquad\forall v\in W^{1,p_-}_0(\Omega) \,,
\end{alignat*}
it follows
\[
\lim_{k\to\infty}
\int_\Omega |\nabla w_k - \nabla w|^p\,dx = 0\,,
\]
whence $w\in W^{1,p}_0(\Omega)$.
Therefore $(g)\Rightarrow(c)$.
\end{proof}
\begin{rem}
If $\Omega$ is not assumed to be connected, it holds
\[
\begin{array}{ccccc}
(b)&\Leftrightarrow&(c)&\Leftrightarrow&(g)\\
&&\Downarrow\\
(a)&\Leftrightarrow&(d)
\end{array}
\]
In fact the same proof shows that
\[
\begin{array}{ccccc}
(b)&\Leftrightarrow&(c)&\Leftarrow&(g)\\
&&\Downarrow\\
(a)&\Leftrightarrow&(d)
\end{array}
\]
and it is obvious that $(c)\Rightarrow(g)$.
\par
On the other hand, let $U$ be a bounded
open set as in Remark~\ref{rem:diff}, with
$W^{1,p_-}_0(U)\neq W^{1,p}_0(U)$,
and let $\Omega=U\cup B$,
where $B$ is an open ball with 
$\overline{U}\cap \overline{B}=\emptyset$.
Then $W^{1,p_-}_0(\Omega)\neq W^{1,p}_0(\Omega)$,
so that $(b)$, $(c)$ and $(g)$ are false. 
However, if the ball~$B$ is large enough, the first 
eigenvalue associated with $\Omega$
coincides with that associated with $B$, which has the 
segment property, so that assertions $(a)$ and $(d)$ are true.
\end{rem}
\begin{rem}
Let us stress, in Theorem~\ref{thm:char}, the assertion
$(a)\Rightarrow(b)$.
When $\Omega$ is connected, the convergence of the first
eigenvalue implies the $\Gamma$-convergence of the full
functional.
This fact will be on the basis of the next
Corollary~\ref{cor:high}. 
\end{rem}
\begin{cor}
\label{cor:conv}
If $\Omega$ is connected and
\[
\lim_{s\to p^-} \lambda_s^{(1)} = \lambda_p^{(1)}\,,
\]
then it holds
\[
\lim_{s\to p^-}\,
\int_\Omega |\nabla u_s - \nabla u_p|^s\,dx = 0\,.
\]
\end{cor}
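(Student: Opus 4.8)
The plan is to deduce this directly from the two preceding theorems, so that essentially no new argument is needed. The hypothesis of the corollary is exactly assertion $(a)$ of Theorem~\ref{thm:char}, and since $\Omega$ is connected that theorem applies; in particular $(a)$ implies $(e)$, namely $\underline{u}_p = u_p$. This is the one step I would isolate, as it is where the assumption $\lim_{s\to p^-}\lambda_s^{(1)} = \lambda_p^{(1)}$ is actually used.

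Once the identification $\underline{u}_p = u_p$ is in hand, I would simply invoke the second part of Theorem~\ref{thm:limlambda}, which holds unconditionally (for $\Omega$ connected) and asserts
\[
\lim_{s\to p^-}\int_\Omega |\nabla u_s - \nabla\underline{u}_p|^s\,dx = 0\,.
\]
Replacing $\underline{u}_p$ by $u_p$ in this limit yields precisely
\[
\lim_{s\to p^-}\int_\Omega |\nabla u_s - \nabla u_p|^s\,dx = 0\,,
\]
which is the claim.

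I do not expect any genuine obstacle here: all the analytic work — the compactness argument extracting the limit eigenfunction, the uniform integrability coming from Proposition~\ref{prop:ineq}, and the Clarkson-inequality trick upgrading weak convergence to strong convergence of gradients — has already been carried out in the proof of Theorem~\ref{thm:limlambda}, and the equivalence $(a)\Leftrightarrow(e)$ is established in Theorem~\ref{thm:char}. The corollary is therefore a one-line consequence, and the only thing worth spelling out is that the hypothesis is literally statement $(a)$ of Theorem~\ref{thm:char} so that the chain $(a)\Rightarrow(e)$ is available.
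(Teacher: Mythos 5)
Your proposal is correct and coincides with the paper's own proof: both deduce $\underline{u}_p = u_p$ from the implication $(a)\Rightarrow(e)$ of Theorem~\ref{thm:char} and then invoke the second part of Theorem~\ref{thm:limlambda} to obtain the stated limit.
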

\begin{proof}
From Theorem~\ref{thm:char} we infer that $\underline{u}_p =u_p$.
By Theorem~\ref{thm:limlambda} we conclude that
\[
\lim_{s\to p^-}\,
\int_\Omega |\nabla u_s - \nabla u_p|^s\,dx =
\lim_{s\to p^-}\,
\int_\Omega |\nabla u_s - \nabla \underline{u}_p|^s\,dx = 0\,.
\]
\end{proof}
\begin{rem}
The converse of the previous Corollary was known
since a long time (see~\cite[Theorem~3.11]{lindqvist1993}),
while Corollary~\ref{cor:conv} was proposed as an open problem.
Corollary~\ref{cor:conv} also answers a question raised
in~\cite{lindqvist1993} concerning the formulation
of~Lemma~3.12 in that paper.
\end{rem}

%--------------------------------------------------------------------

\section{Behavior from the right of the first eigenvalue}
\label{sect:right}
The next results are essentially contained 
in~\cite{lindqvist1993, degiovanni_marzocchi2014}.
We mention them for the sake of completeness.
\begin{thm}
\label{thm:limlambdar}
If $1<p<\infty$, it holds
\[
\lim_{s\to p^+} \lambda_s^{(1)} =
\lim_{s\to p^+} \underline{\lambda}_s^{(1)} =
\lambda_p^{(1)} \,.
\]
If $\Omega$ is connected, we also have
\[
\lim_{s\to p^+}\,
\int_\Omega |\nabla u_s - \nabla u_p|^p\,dx = 
\lim_{s\to p^+}\,
\int_\Omega |\nabla\underline{u}_s - \nabla u_p|^p\,dx
= 0\,.
\]
\end{thm}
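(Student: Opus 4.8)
The plan is to prove the two chains of equalities in turn. The basic facts I would use throughout are that, for $s>p$, both $W^{1,s}_0(\Omega)$ and $W^{1,(s)_-}_0(\Omega)$ are contained in $W^{1,p}_0(\Omega)$ (by Proposition~\ref{prop:wp-}$(a)$ and the definition of $W^{1,(s)_-}_0(\Omega)$, since $\Omega$ is bounded), together with the Hölder bound
\[
\int_\Omega|\nabla u|^p\,dx\le\mathcal{L}^N(\Omega)^{1-p/s}
\left(\int_\Omega|\nabla u|^s\,dx\right)^{p/s}\,,\qquad s>p\,,
\]
and its analogue with $u$ in place of $\nabla u$. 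Since $\underline\lambda_s^{(1)}\le\lambda_s^{(1)}$ always, for the first assertion it suffices to prove $\limsup_{s\to p^+}\lambda_s^{(1)}\le\lambda_p^{(1)}$ and $\liminf_{s\to p^+}\underline\lambda_s^{(1)}\ge\lambda_p^{(1)}$. The former I would get by testing the Rayleigh quotient defining $\lambda_s^{(1)}$ with a fixed $\varphi\in C^\infty_c(\Omega)\setminus\{0\}$: as $s\to p^+$ the quotient tends to $\int_\Omega|\nabla\varphi|^p\,dx/\int_\Omega|\varphi|^p\,dx$ by dominated convergence, and the density of $C^\infty_c(\Omega)$ in $W^{1,p}_0(\Omega)$ lets me pass to the infimum over $\varphi$.

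For the lower bound I would take $s_k\to p^+$ with $\underline\lambda_{s_k}^{(1)}\to L:=\liminf_{s\to p^+}\underline\lambda_s^{(1)}$, and choose minimizers $v_k\in W^{1,(s_k)_-}_0(\Omega)$ with $v_k\ge0$, $\int_\Omega v_k^{s_k}\,dx=1$, $\int_\Omega|\nabla v_k|^{s_k}\,dx=\underline\lambda_{s_k}^{(1)}$. By the Hölder bound $(v_k)$ is bounded in $W^{1,p}_0(\Omega)$, and, interpolating the $L^{s_k}$-norm between the $L^p$-norm and the $L^q$-norm for a fixed exponent $q>p$ with $W^{1,p}_0(\Omega)\hookrightarrow L^q(\Omega)$, one obtains $\int_\Omega v_k^p\,dx\to1$. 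By compactness of the embedding $W^{1,p}_0(\Omega)\hookrightarrow L^p(\Omega)$, a subsequence converges weakly in $W^{1,p}_0(\Omega)$ and strongly in $L^p(\Omega)$ to some $u\ge0$ with $\int_\Omega u^p\,dx=1$, and weak lower semicontinuity together with the Hölder bound gives
\[
\int_\Omega|\nabla u|^p\,dx\le\liminf_k\mathcal{L}^N(\Omega)^{1-p/s_k}
\left(\underline\lambda_{s_k}^{(1)}\right)^{p/s_k}=L
\]
(here $L>0$, since $L=0$ would force $u=0$). Hence $\lambda_p^{(1)}\le\int_\Omega|\nabla u|^p\,dx\le L$, and combined with the upper bound all four limits equal $\lambda_p^{(1)}$.

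For the eigenfunctions, now with $\Omega$ connected, I would run the same compactness argument along an arbitrary sequence $s_k\to p^+$, taking $v_k=\underline u_{s_k}$, respectively $v_k=u_{s_k}$, and using that $\underline\lambda_{s_k}^{(1)}\to\lambda_p^{(1)}$ is already known: the limit $u$ then satisfies $u\ge0$, $\int_\Omega u^p\,dx=1$ and $\int_\Omega|\nabla u|^p\,dx\le\lambda_p^{(1)}$, hence is a nonnegative normalized first eigenfunction of $-\Delta_p$ on $W^{1,p}_0(\Omega)$, and so equals $u_p$ by the simplicity of the first eigenvalue on a connected domain (\cite{lindqvist1990}). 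Since the limit is independent of the subsequence, the whole family converges to $u_p$; moreover $\int_\Omega|\nabla v_k|^p\,dx\to\lambda_p^{(1)}=\int_\Omega|\nabla u_p|^p\,dx$ (squeeze the Hölder upper bound against the weak lower semicontinuity bound), and weak convergence of the gradients in $L^p(\Omega;\R^N)$ together with convergence of their norms yields strong convergence by the uniform convexity of $L^p$, which is precisely $\int_\Omega|\nabla v_k-\nabla u_p|^p\,dx\to0$.

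I expect the only genuinely delicate point to be the normalization $\int_\Omega v_k^p\,dx\to1$ in the lower-bound argument: one has to rule out both vanishing and concentration of the $L^{s_k}$-mass as $s_k\to p^+$, which is exactly where the extra integrability from $W^{1,p}_0(\Omega)\hookrightarrow L^q(\Omega)$ with some $q>p$ enters. The remaining ingredients — Hölder's inequality, Rellich's theorem, weak lower semicontinuity, and uniform convexity of $L^p$ — are routine.
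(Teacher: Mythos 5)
Your proof is correct, and it follows essentially the same route as the paper, whose own ``proof'' is just a citation of Theorems~3.5 and~3.6 of Lindqvist's paper with the remark that the same arguments cover $\underline{\lambda}_s^{(1)}$ and $\underline{u}_s$: for $s>p$ the admissible functions already lie in $W^{1,p}_0(\Omega)$, the upper bound comes from smooth test functions, the lower bound from H\"older plus weak compactness, and strong gradient convergence from convergence of norms together with uniform convexity of $L^p$. You correctly supply the one nontrivial detail (the interpolation argument showing $\int_\Omega v_k^p\,dx\to 1$), so the write-up stands as a self-contained version of the cited argument.
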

\begin{proof}
The assertions concerning $\lambda_s^{(1)}$ and
$u_s$ are proved 
in~\cite[Theorems~3.5 and~3.6]{lindqvist1993}, but the 
same arguments apply also to 
$\underline{\lambda}_s^{(1)}$ and
$\underline{u}_s$.
\end{proof}
\begin{thm}
\label{thm:gammar}
For every sequence $(p_h)$ strictly decreasing to $p$, with
$1<p<\infty$, it holds
\[
\Gamma-\lim_{h\to\infty} \mathcal{E}_{p_h} = 
\Gamma-\lim_{h\to\infty} \underline{\mathcal{E}}_{p_h} = 
\mathcal{E}_p\,.
\]
\end{thm}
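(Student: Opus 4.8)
The plan is to follow the pattern of the proof of Theorem~\ref{thm:gammal}: replace $\mathcal{E}_s$ and $\underline{\mathcal{E}}_s$ by the normalized functionals $f_s=\mathcal{L}^N(\Omega)^{-1/s}\,\mathcal{E}_s$ and $\underline{f}_s=\mathcal{L}^N(\Omega)^{-1/s}\,\underline{\mathcal{E}}_s$, show that
\[
\Gamma\text{-}\lim_{h\to\infty} f_{p_h}=\Gamma\text{-}\lim_{h\to\infty}\underline{f}_{p_h}=f_p\,,
\]
and then transfer back to $\mathcal{E}_{p_h},\underline{\mathcal{E}}_{p_h}$ using that the normalizing constants $\mathcal{L}^N(\Omega)^{1/p_h}$ converge to the positive number $\mathcal{L}^N(\Omega)^{1/p}$. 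Two elementary facts do all the work. First, for $1<s<t<\infty$ H\"older's inequality gives
\[
\mathcal{L}^N(\Omega)^{-1/s}\left(\int_\Omega|\nabla u|^s\,dx\right)^{1/s}
\leq\mathcal{L}^N(\Omega)^{-1/t}\left(\int_\Omega|\nabla u|^t\,dx\right)^{1/t}
\]
whenever the right-hand side is finite. Second, directly from the definition of $W^{1,t_-}_0(\Omega)$ one has, for $1<p<t$,
\[
W^{1,t}_0(\Omega)\subseteq W^{1,({t})_-}_0(\Omega)\subseteq W^{1,p}_0(\Omega)\,,
\]
the last inclusion because $p$ is one of the exponents $s\in(1,t)$ occurring in $\bigcap_{1<s<t}W^{1,s}_0(\Omega)$.

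First I would prove the $\liminf$ inequality, simultaneously for $f_{p_h}$ and $\underline{f}_{p_h}$. Let $u_h\to u$ in $L^1_{loc}(\Omega)$ with $\liminf_h f_{p_h}(u_h)<\infty$ (otherwise there is nothing to prove); passing to a subsequence realizing the $\liminf$, we may assume $u_h\in W^{1,p_h}_0(\Omega)$ with $\|\nabla u_h\|_{L^{p_h}(\Omega)}$ bounded. Since $p<p_h$, H\"older's inequality makes $\|\nabla u_h\|_{L^p(\Omega)}$ bounded as well, so by the Poincar\'e inequality on $W^{1,p}_0(\Omega)$ the sequence $(u_h)$ is bounded in $W^{1,p}_0(\Omega)$; a further subsequence converges weakly there, and by the $L^1_{loc}$-convergence the weak limit must be $u$, whence $u\in W^{1,p}_0(\Omega)$ and $\int_\Omega|\nabla u|^p\,dx\leq\liminf_h\int_\Omega|\nabla u_h|^p\,dx$. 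Combining this with $f_p(u_h)\leq f_{p_h}(u_h)$ (the inequality above) yields $\liminf_h f_{p_h}(u_h)\geq f_p(u)$. For $\underline{f}_{p_h}$ the argument is verbatim the same, since $u_h\in W^{1,({p_h})_-}_0(\Omega)$ already forces $u_h\in W^{1,p}_0(\Omega)$ and the only quantitative input used is the bound on $\|\nabla u_h\|_{L^{p_h}(\Omega)}$.

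Next I would produce the recovery sequence. If $f_p(u)=+\infty$ the constant sequence works; if $u\in W^{1,p}_0(\Omega)$, choose $\varphi_j\in C^\infty_c(\Omega)$ with $\varphi_j\to u$ in $W^{1,p}_0(\Omega)$, hence in $L^1_{loc}(\Omega)$ and with $f_p(\varphi_j)\to f_p(u)$. For each fixed $j$, since $\varphi_j\in C^\infty_c(\Omega)\subseteq W^{1,p_h}_0(\Omega)\subseteq W^{1,({p_h})_-}_0(\Omega)$, dominated convergence gives $f_{p_h}(\varphi_j)=\underline{f}_{p_h}(\varphi_j)\to f_p(\varphi_j)$ as $h\to\infty$. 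A standard diagonal choice $h\mapsto j(h)\to\infty$ then produces $u_h=\varphi_{j(h)}\to u$ in $L^1_{loc}(\Omega)$ with $f_{p_h}(u_h)=\underline{f}_{p_h}(u_h)\to f_p(u)$; this single sequence serves as recovery sequence for both functionals, and together with the $\liminf$ inequality it shows $\Gamma\text{-}\lim_h f_{p_h}=\Gamma\text{-}\lim_h\underline{f}_{p_h}=f_p$, whence the statement.

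The individual estimates are routine; the point that really needs attention, and which marks the difference with the case $s\to p^-$ of Theorem~\ref{thm:gammal}, is the inclusion $W^{1,({p_h})_-}_0(\Omega)\subseteq W^{1,p}_0(\Omega)$ valid for $p_h>p$: it is exactly what prevents the (decreasing) sequences $(f_{p_h})$, $(\underline{f}_{p_h})$ from ever dropping below the $W^{1,p}_0$-energy $f_p$, so that \emph{from the right} even the enlarged functionals $\underline{\mathcal{E}}_{p_h}$ ``feel'' only $W^{1,p}_0(\Omega)$ and the common $\Gamma$-limit is $\mathcal{E}_p$, not $\underline{\mathcal{E}}_p$. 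Equivalently, one may phrase the whole proof through monotonicity: by the inequality above and the inclusions above, $(f_{p_h})$ and $(\underline{f}_{p_h})$ are decreasing, so (the decreasing analogue of~\cite[Proposition~5.4]{dalmaso1993}, or a short direct check) their $\Gamma$-limits are the lower semicontinuous envelopes of their pointwise limits; the two facts just mentioned then identify these envelopes with $f_p$, using the lower semicontinuity of $\mathcal{E}_p$ and the density of $C^\infty_c(\Omega)$ in $W^{1,p}_0(\Omega)$.
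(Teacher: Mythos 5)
Your proof is correct, but it proceeds along a genuinely different route from the paper, which does not argue directly at all: the paper's proof of Theorem~\ref{thm:gammar} simply invokes \cite[Theorem~5.3]{degiovanni_marzocchi2014} for the assertion concerning $\mathcal{E}_{p_h}$ (proved there for $1<p<N$, with the remark that the same argument covers the other cases and, implicitly, $\underline{\mathcal{E}}_{p_h}$ as well). You instead give a self-contained argument: the normalized H\"older monotonicity $\mathcal{L}^N(\Omega)^{-1/s}\|\nabla u\|_{L^s}\leq\mathcal{L}^N(\Omega)^{-1/t}\|\nabla u\|_{L^t}$ together with the inclusion $W^{1,(p_h)_-}_0(\Omega)\subseteq W^{1,p}_0(\Omega)$ for $p_h>p$ (which is indeed the decisive structural point distinguishing the right limit from Theorem~\ref{thm:gammal}) yields the liminf bound via Poincar\'e, weak compactness in $W^{1,p}_0(\Omega)$ and weak lower semicontinuity, while density of $C^\infty_c(\Omega)$ in $W^{1,p}_0(\Omega)$ plus a diagonal argument in the metrizable space $L^1_{loc}(\Omega)$ produces a single recovery sequence for both families; your closing reformulation via the decreasing analogue of \cite[Proposition~5.4]{dalmaso1993} (that for decreasing sequences the $\Gamma$-limit is the lower semicontinuous envelope of the pointwise limit, identified with $\mathcal{E}_p$ by lower semicontinuity and density) is also sound and mirrors the monotone scheme the paper itself uses for the left limit. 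What each approach buys: yours is elementary, treats all exponents and both functionals uniformly, and makes the paper logically independent of the companion reference at this point; the paper's citation is shorter and consistent with its strategy of reusing the abstract $\Gamma$-convergence machinery of \cite{degiovanni_marzocchi2014}, which is needed anyway for Theorem~\ref{thm:high}. Minor cosmetic points only: in the case $\mathcal{E}_p(u)=+\infty$ the constant sequence works precisely because of the already established liminf inequality (worth saying explicitly), and the normalization by $\mathcal{L}^N(\Omega)^{-1/s}$ could even be dispensed with in the direct argument since the constants converge to $1$-like factors, but as written it is harmless and makes the monotonicity phrasing available.
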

\begin{proof}
The assertion concerning $\mathcal{E}_{p_h}$
is proved in~\cite[Theorem~5.3]{degiovanni_marzocchi2014}
when $1<p<N$, but the same argument applies to the other
cases. 
\end{proof}

%--------------------------------------------------------------------

\section{Behavior of higher eigenvalues}
\label{sect:high}
Let $i$ be an index with the following properties:
\begin{itemize}
\item[$(i)$]
$i(K)$ is an integer greater or equal than $1$ and is defined 
whenever $K$ is a nonempty, compact and symmetric subset of a 
topological vector space such that $0\not\in K$;
\item[$(ii)$]
if $X$ is a topological vector space and 
$K\subseteq X\setminus\{0\}$ is compact, symmetric and nonempty, 
then there exists an open subset $U$ of $X\setminus\{0\}$ 
such that $K\subseteq U$ and
\[
~\qquad\quad
\idx{\widehat{K}} \leq \idx{K}
\quad\text{for any compact, symmetric and nonempty 
$\widehat{K}\subseteq U$}\,;
\]
\item[$(iii)$]
if $X, Y$ are two topological vector spaces, 
$K\subseteq X\setminus\{0\}$ is compact, symmetric and nonempty 
and $\pi:K\rightarrow Y\setminus\{0\}$ is continuous and 
odd, we have
\[
\idx{\pi(K)} \geq \idx{K}\,;
\]
\item[$(iv)$]
if $X$ is a normed space with
$1\leq \dim X <\infty$, we have
\[
i\left(\left\{u\in X:\,\,\|u\|=1\right\}\right) =
\dim X\,.
\]
\end{itemize} 
Well known examples are the Krasnosel'ski\u{\i} genus
(see e.g.~\cite{krasnoselskii1964, rabinowitz1986, willem1996}) 
and the $\Z_2$-cohomo\-logical index 
(see~\cite{fadell_rabinowitz1977, fadell_rabinowitz1978}).
More general examples are contained in~\cite{bartsch1993}.
\par
If $1<p<\infty$, we consider
\begin{alignat*}{3}
&M = &&\left\{u\in W^{1,p}_0(\Omega):\,\,
\int_\Omega |u|^p\,dx=1\right\}\,,\\
&\underline{M} = &&\left\{u\in W^{1,p_-}_0(\Omega):\,\,
\int_\Omega |u|^p\,dx=1\right\}\,,
\end{alignat*}
endowed with the $W^{1,p}(\Omega)$-topology, and
we define for every $m\geq 1$ the variational eigenvalues 
of the $p$-Laplace operator as
\begin{alignat*}{3}
&\lambda_p^{(m)} &&= 
\inf\biggl\{\max_{u\in K}\int_\Omega |\nabla u|^p\,dx:\,\,
\text{$K$ is a nonempty, compact}
\biggr. \\ \biggl. &&&\qquad\qquad\qquad\qquad\qquad
\text{and symmetric subset of $M$ with 
$i(K)\geq m$}\biggr\}\,,\\
&\underline{\lambda}_p^{(m)} &&= 
\inf\biggl\{\max_{u\in K}\int_\Omega |\nabla u|^p\,dx:\,\,
\text{$K$ is a nonempty, compact}
\biggr. \\ \biggl. &&&\qquad\qquad\qquad\qquad\qquad
\text{and symmetric subset of $\underline{M}$ with 
$i(K)\geq m$}\biggr\}\,.
\end{alignat*}
It is easily seen that the new definitions of
$\lambda_p^{(1)}$ and $\underline{\lambda}_p^{(1)}$
are consistent with the previous ones
and we clearly have
\begin{alignat*}{3}
&\lambda_p^{(m)} &&\leq \lambda_p^{(m+1)}\,,\\
&\underline{\lambda}_p^{(m)} &&\leq
\underline{\lambda}_p^{(m+1)} \,,\\
&\underline{\lambda}_p^{(m)} &&\leq \lambda_p^{(m)}\,.
\end{alignat*}
\begin{thm}
\label{thm:high}
If $1<p<\infty$, for every $m\geq 1$ we have
\begin{alignat*}{3}
&\lim_{s\to p^-} \underline{\lambda}_s^{(m)} &&=
\lim_{s\to p^-} \lambda_s^{(m)} &&=
\underline{\lambda}_p^{(m)} \,,\\
&\lim_{s\to p^+} \underline{\lambda}_s^{(m)} &&=
\lim_{s\to p^+} \lambda_s^{(m)} &&=
\lambda_p^{(m)} \,.
\end{alignat*}
\end{thm}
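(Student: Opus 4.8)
The plan is to deduce all four limits from the $\Gamma$-convergence statements of Theorems~\ref{thm:gammal} and~\ref{thm:gammar}, together with suitable equi-coercivity, following the scheme by which variational eigenvalues defined through an index satisfying $(i)$--$(iv)$ are stable under $\Gamma$-convergence, as in~\cite{champion_depascale2007}. First I would reduce to sequences: it is enough to treat $\lambda_{p_h}^{(m)},\underline{\lambda}_{p_h}^{(m)}\to\underline{\lambda}_p^{(m)}$ for every $(p_h)$ strictly increasing to $p$ and $\lambda_{p_h}^{(m)},\underline{\lambda}_{p_h}^{(m)}\to\lambda_p^{(m)}$ for every $(p_h)$ strictly decreasing to $p$. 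By homogeneity the radial retraction onto the $L^s$-sphere is odd and continuous, so, using $(iii)$, each of $\lambda_s^{(m)}$ and $\underline{\lambda}_s^{(m)}$ equals the $m$-th min--max level of the Rayleigh quotient $u\mapsto\int_\Omega|\nabla u|^s\,dx\big/\int_\Omega|u|^s\,dx$ over the nonempty, compact, symmetric subsets $K$, with $i(K)\geq m$, of $W^{1,s}_0(\Omega)\setminus\{0\}$, respectively of $W^{1,s_-}_0(\Omega)\setminus\{0\}$. I will use this constraint-free form throughout; since $\underline{\lambda}_s^{(m)}\leq\lambda_s^{(m)}$, once the relevant one-sided inequalities are in place, chaining them closes all four limits.

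The behaviour from the right is comparatively elementary. For $p_h\downarrow p$, a near-optimal competitor $K_h\subseteq W^{1,({p_h})_-}_0(\Omega)\setminus\{0\}$ for $\underline{\lambda}_{p_h}^{(m)}$ is contained in $W^{1,p}_0(\Omega)$ (because $p<p_h$), is compact in the coarser $W^{1,p}(\Omega)$-topology and, by $(iii)$, has index $\geq m$ there, hence is admissible for $\lambda_p^{(m)}$; Hölder's inequality and a uniform higher-integrability bound (controlling $\int_\Omega|u|^{p_h}\,dx$ against $\int_\Omega|u|^p\,dx$ on $K_h$) give $\lambda_p^{(m)}\leq\max_{u\in K_h}\int_\Omega|\nabla u|^p\,dx\big/\int_\Omega|u|^p\,dx\to\liminf_h\underline{\lambda}_{p_h}^{(m)}$ along a suitable subsequence. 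Conversely, a competitor $K$ for $\lambda_p^{(m)}$ may be pushed, by an odd partition-of-unity construction, onto a nearby compact symmetric $K'$ of index $\geq m$ made of functions from a fixed finite-dimensional subspace of $C^\infty_c(\Omega)\subseteq W^{1,s}_0(\Omega)$, on which the $p_h$-Rayleigh quotient converges to the $p$-Rayleigh quotient uniformly, giving $\limsup_h\lambda_{p_h}^{(m)}\leq\lambda_p^{(m)}$; with $\underline{\lambda}\leq\lambda$ this settles the three right-hand limits. The same scaling device (using $W^{1,p_-}_0(\Omega)\subseteq W^{1,s}_0(\Omega)$ and the Sobolev bound of Proposition~\ref{prop:wp-}$(c)$) yields the easy inequality $\limsup_{s\to p^-}\lambda_s^{(m)}\leq\underline{\lambda}_p^{(m)}$, so for the left-hand side it remains to prove $\liminf_{s\to p^-}\underline{\lambda}_s^{(m)}\geq\underline{\lambda}_p^{(m)}$.

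For this liminf I would argue as in the proof of Theorem~\ref{thm:limlambda}. Pass to a subsequence $(p_h)$ realizing the $\liminf$, pick compact symmetric $K_h\subseteq W^{1,({p_h})_-}_0(\Omega)\setminus\{0\}$ with $i(K_h)\geq m$, normalized by $\int_\Omega|u|^{p_h}\,dx=1$ and $\max_{u\in K_h}\int_\Omega|\nabla u|^{p_h}\,dx\leq\underline{\lambda}_{p_h}^{(m)}+1/h$. By the easy inequality above $\underline{\lambda}_{p_h}^{(m)}$ stays bounded; fixing $1<t<p$ close enough to $p$ that $W^{1,t}_0(\Omega)$ embeds compactly into $L^p(\Omega)$, Hölder's inequality shows $\bigcup_h K_h$ is bounded in $W^{1,t}_0(\Omega)$, hence relatively compact in $L^p(\Omega)$. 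Let $L$ be the Kuratowski upper limit of $(K_h)$ in $L^p(\Omega)$: it is compact and symmetric, and $0\notin L$ since a uniform higher-integrability bound keeps the $L^p$-norms on $\bigcup_h K_h$ away from $0$. The $\Gamma$-liminf inequality of Theorem~\ref{thm:gammal} (which applies along $L^p$-convergent, hence $L^1_{loc}$-convergent, sequences), together with the uniform integrability of $\{|u_h|^{p_h}\}$ that forces $\int_\Omega|u_h|^{p_h}\,dx\to\int_\Omega|u|^p\,dx$ whenever $u_h\to u$ in $L^p(\Omega)$, shows every $u\in L$ lies in $W^{1,p_-}_0(\Omega)$ with $\int_\Omega|u|^p\,dx=1$ and $\int_\Omega|\nabla u|^p\,dx\leq\liminf_h\underline{\lambda}_{p_h}^{(m)}$. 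Applying the index axiom $(ii)$ to $L$ in $W^{1,t}_0(\Omega)$ with the weak topology — which on the bounded set $L\cup\bigcup_h K_h$ coincides with the $L^p(\Omega)$-topology — produces a neighbourhood containing all the $K_h$ eventually, whence $i(K_h)\leq i(L)$ and so $i(L)\geq m$; thus $\underline{\lambda}_p^{(m)}\leq\max_{u\in L}\int_\Omega|\nabla u|^p\,dx\leq\liminf_h\underline{\lambda}_{p_h}^{(m)}$.

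The main obstacle is this last inequality: the limiting set $L$ is only compact in the weak $W^{1,t}_0(\Omega)$-topology (equivalently in $L^p(\Omega)$), whereas $\underline{\lambda}_p^{(m)}$ is defined through subsets of $\underline{M}$ that are compact, and whose index is computed, in the strong $W^{1,p}(\Omega)$-topology. To bridge this one must know that $\underline{\lambda}_p^{(m)}$ and $\lambda_p^{(m)}$ are unchanged when, in their definition, strong compactness in $W^{1,p}(\Omega)$ is relaxed to compactness in $L^p(\Omega)$ (i.e. weak compactness in $W^{1,p_-}_0(\Omega)$); this is a standard consequence of the $(S_+)$, respectively Palais--Smale, property of $-\Delta_p$ on $W^{1,p_-}_0(\Omega)$ — precisely the mechanism, rooted in Clarkson's inequality, already used for the minimizers in Theorem~\ref{thm:limlambda} — which allows one to deform an arbitrary weakly compact competitor onto a strongly compact one with no larger Dirichlet energy and no smaller index. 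Granting this equivalence, together with the routine but necessary bookkeeping of the moving constraint $\int_\Omega|u|^{p_h}\,dx=1$, the argument is complete; the right-hand side needs none of this, the compactness there being the genuine strong $W^{1,p}(\Omega)$-compactness provided directly by Rellich's theorem.
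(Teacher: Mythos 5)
Your overall architecture is the same as the paper's: reduce everything to the two $\Gamma$-convergence statements (Theorems~\ref{thm:gammal} and~\ref{thm:gammar}) plus a stability property of index-defined minimax values, with the left limit handled by the compactness scheme of Theorem~\ref{thm:limlambda} (uniform $W^{1,t}_0$-bounds for $t<p$, Kuratowski upper limit in $L^p$, axioms $(ii)$--$(iii)$ to keep the index $\geq m$). The right-sided limits and the easy inequality $\limsup_{s\to p^-}\lambda_s^{(m)}\leq\underline{\lambda}_p^{(m)}$ are fine as sketched. The difference is that the paper does not redo this by hand: it invokes \cite[Corollary~3.3 and Corollary~4.4]{degiovanni_marzocchi2014} after rephrasing $\lambda_p^{(m)}$, $\underline{\lambda}_p^{(m)}$ as minimax values of $\mathcal{E}_p$, $\underline{\mathcal{E}}_p$ over compact symmetric subsets of $\widehat{M}=\{g_p=1\}$ in the $L^1_{loc}(\Omega)$-topology.

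That rephrasing is exactly the point where your argument has a genuine gap, and you name it yourself: you need to know that $\underline{\lambda}_p^{(m)}$ (and $\lambda_p^{(m)}$) is unchanged when, in its definition, compactness and the index are taken in the $L^p(\Omega)$ (equivalently weak $W^{1,t}_0$) topology rather than in the strong $W^{1,p}(\Omega)$-topology; without this, the limit set $L$ you construct is simply not an admissible competitor and the inequality $\underline{\lambda}_p^{(m)}\leq\max_{u\in L}\int_\Omega|\nabla u|^p\,dx$ does not follow. Your justification — ``a standard consequence of the $(S_+)$/Palais--Smale property, which allows one to deform an arbitrary weakly compact competitor onto a strongly compact one with no larger Dirichlet energy and no smaller index'' — is not a proof, and the most naive reading of it is suspicious in precisely this setting: one cannot strongly approximate elements of $W^{1,p_-}_0(\Omega)$ by nicer functions inside $W^{1,p}_0(\Omega)$-type classes (that would force $W^{1,p_-}_0(\Omega)=W^{1,p}_0(\Omega)$, false in general), so the ``deformation'' must be an actual construction: for instance an odd, energy-nonincreasing map from $(L,\ L^p\text{-topology})$ into $W^{1,p_-}_0(\Omega)$ strong, such as the penalized minimization $u\mapsto\operatorname{argmin}_w\bigl\{\tfrac1p\int_\Omega|\nabla w|^p\,dx+\tfrac1{\e p}\int_\Omega|w-u|^p\,dx\bigr\}$ followed by renormalization on $\underline{M}$, whose continuity into the strong topology and uniform closeness to the identity on $L$ must be proved; alternatively one cites \cite[Corollary~3.3]{degiovanni_marzocchi2014}, which is precisely this equivalence and is what the paper uses. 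A second, smaller omission you also flag but do not carry out is the moving normalization $\int_\Omega|u|^{p_h}\,dx=1$ versus $\int_\Omega|u|^p\,dx=1$; this one is routine (uniform $L^q$-bounds for some $q>p$ plus Vitali), but the compactness-relaxation step is the heart of the matter, and as it stands your proof assumes it rather than proves it.
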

\begin{proof}
Taking into account Theorems~\ref{thm:gammal} 
and~\ref{thm:gammar}, the assertions follow from the results 
of~\cite{champion_depascale2007, degiovanni_marzocchi2014}.
Let us give some detail following the
approach of~\cite{degiovanni_marzocchi2014}.
\par
If we define $g_p:L^1_{loc}(\Omega)\rightarrow\R$ as
\begin{equation}
\label{eq:gV}
g_p(u) = 
\begin{cases}
\displaystyle{
\left(\int_\Omega |u|^p\,dx\right)^{1/p}}
&\qquad\text{if $u\in L^p(\Omega)$}\,,\\
\noalign{\medskip}
0
&\qquad\text{otherwise}\,,
\end{cases} 
\end{equation}
it is easily seen that $g_p$ is
$L^1_{loc}(\Omega)$-continuous on
\[
\left\{u\in L^1_{loc}(\Omega):\,\,
\underline{\mathcal{E}}_p(u)\leq b\right\}
\]
for any $b\in\R$.
\par
If we consider
\[
\widehat{M} = 
\left\{u\in L^1_{loc}(\Omega):\,\,
g_p(u)=1\right\}
\]
endowed with the $L^1_{loc}(\Omega)$-topology,
by~\cite[Corollary~3.3]{degiovanni_marzocchi2014} we have
\begin{alignat*}{3}
&\left(\lambda_p^{(m)}\right)^{1/p} &&= 
\inf\biggl\{\sup_{u\in K} \mathcal{E}_p(u):\,\,
\text{$K$ is a nonempty, compact}
\biggr. \\ \biggl. &&&\qquad\qquad\qquad\qquad\qquad
\text{and symmetric subset of $\widehat{M}$ with 
$i(K)\geq m$}\biggr\}\,,\\
&\left(\underline{\lambda}_p^{(m)}\right)^{1/p} &&= 
\inf\biggl\{\sup_{u\in K} \underline{\mathcal{E}}_p(u):\,\,
\text{$K$ is a nonempty, compact}
\biggr. \\ \biggl. &&&\qquad\qquad\qquad\qquad\qquad
\text{and symmetric subset of $\widehat{M}$ with 
$i(K)\geq m$}\biggr\}\,,
\end{alignat*}
(see also~\cite[Theorem~5.2]{degiovanni_marzocchi2014}).
Then the assertions follow from
Theorems~\ref{thm:gammal}, \ref{thm:gammar}
and~\cite[Corollary~4.4]{degiovanni_marzocchi2014}
(see also~\cite[Theorem~6.4]{degiovanni_marzocchi2014}).
\end{proof}
\begin{cor}
\label{cor:high}
Let $1<p<\infty$ and assume that $\Omega$ is connected.
Then we have
\[
\lim_{s\to p} \lambda_s^{(m)} =
\lambda_p^{(m)} 
\qquad\forall m\geq 1
\]
if and only if
\[
\lim_{s\to p^-} \lambda_s^{(1)} =
\lambda_p^{(1)} \,.
\]
\end{cor}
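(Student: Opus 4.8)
The plan is to reduce the statement to Theorem~\ref{thm:high} together with the chain of equivalences already established in Theorem~\ref{thm:char}, so that no new analytic work is needed.

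First I would record the consequence of Theorem~\ref{thm:high} that, for every $m\geq 1$, the right limit always exists and equals $\lambda_p^{(m)}$, namely $\lim_{s\to p^+}\lambda_s^{(m)}=\lambda_p^{(m)}$, while the left limit always exists and equals $\underline{\lambda}_p^{(m)}$, namely $\lim_{s\to p^-}\lambda_s^{(m)}=\underline{\lambda}_p^{(m)}$. Since we always have $\underline{\lambda}_p^{(m)}\leq\lambda_p^{(m)}$, it follows that the two-sided limit $\lim_{s\to p}\lambda_s^{(m)}$ exists and equals $\lambda_p^{(m)}$ if and only if $\underline{\lambda}_p^{(m)}=\lambda_p^{(m)}$. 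Thus the corollary is equivalent to: $\underline{\lambda}_p^{(m)}=\lambda_p^{(m)}$ for every $m\geq 1$ if and only if $\lim_{s\to p^-}\lambda_s^{(1)}=\lambda_p^{(1)}$.

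For the implication from the right-hand condition, I would assume $\lim_{s\to p^-}\lambda_s^{(1)}=\lambda_p^{(1)}$ and apply the equivalence $(a)\Leftrightarrow(c)$ of Theorem~\ref{thm:char} (this is where connectedness of $\Omega$ enters) to get $W^{1,p_-}_0(\Omega)=W^{1,p}_0(\Omega)$. Then the constraint sets $M$ and $\underline{M}$ coincide, so the two minimax characterizations of $\lambda_p^{(m)}$ and $\underline{\lambda}_p^{(m)}$ are literally identical, whence $\underline{\lambda}_p^{(m)}=\lambda_p^{(m)}$ for every $m\geq 1$, and the first paragraph concludes. For the converse I would only need the case $m=1$: if $\lim_{s\to p}\lambda_s^{(m)}=\lambda_p^{(m)}$ holds for all $m$, then in particular the first paragraph gives $\underline{\lambda}_p^{(1)}=\lambda_p^{(1)}$, which is assertion $(d)$ of Theorem~\ref{thm:char}, and the equivalence $(d)\Leftrightarrow(a)$ yields $\lim_{s\to p^-}\lambda_s^{(1)}=\lambda_p^{(1)}$.

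I do not expect a genuine obstacle here: all the substance — the $\Gamma$-convergence of the functionals $\mathcal{E}_{p_h}$ and $\underline{\mathcal{E}}_{p_h}$ in Theorems~\ref{thm:gammal} and~\ref{thm:gammar}, the continuity of the variational eigenvalues under $\Gamma$-convergence that underlies Theorem~\ref{thm:high}, and the equivalences of Theorem~\ref{thm:char} — has already been carried out. The only point needing care is that Theorem~\ref{thm:char} requires $\Omega$ connected, which is exactly why connectedness is assumed in the corollary; the reduction via Theorem~\ref{thm:high} itself needs no such hypothesis.
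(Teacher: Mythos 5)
Your proposal is correct and follows essentially the same route as the paper: use Theorem~\ref{thm:high} to identify the one-sided limits with $\underline{\lambda}_p^{(m)}$ and $\lambda_p^{(m)}$, and for the forward implication invoke $(a)\Leftrightarrow(c)$ of Theorem~\ref{thm:char} (where connectedness enters) to get $W^{1,p_-}_0(\Omega)=W^{1,p}_0(\Omega)$ and hence $\underline{\lambda}_p^{(m)}=\lambda_p^{(m)}$ for all $m$. The only cosmetic difference is in the converse, which the paper dispatches as obvious (the two-sided limit at $m=1$ already contains the left limit), whereas you take a harmless detour through $(d)\Leftrightarrow(a)$ of Theorem~\ref{thm:char}.
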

\begin{proof}
If
\[
\lim_{s\to p^-} \lambda_s^{(1)} =
\lambda_p^{(1)} \,,
\]
from Theorem~\ref{thm:char} we infer that
$W^{1,p_-}_0(\Omega)=W^{1,p}_0(\Omega)$,
whence
$\underline{\lambda}_p^{(m)} = \lambda_p^{(m)}$
for any $m\geq 1$.
Then the assertion follows from Theorem~\ref{thm:high}.
\par
The converse is obvious.
\end{proof}

%--------------------------------------------------------------------

\section{Appendix}
\label{sect:app}
In this appendix we see that several well known properties of
$W^{1,p}_0(\Omega)$ are still valid for $W^{1,p_-}_0(\Omega)$.
\begin{thm}
\label{thm:mon}
If $1<p<\infty$, the following facts hold:
\begin{itemize}
\item[$(a)$]
for every $z\in L^p(\Omega)$ and $Z\in L^p(\Omega;\R^N)$,
there exists one and only one
$w\in W^{1,p_-}_0(\Omega)\subseteq W^{1,p}(\Omega)$ such that
\[
\int_\Omega |\nabla w|^{p-2}\nabla w\cdot\nabla v\,dx 
= \int_\Omega (zv+Z\cdot\nabla v)\,dx
\qquad\forall v\in W^{1,p_-}_0(\Omega)
\]
and the map 
\[
\begin{array}{rclcc}
L^p(\Omega)&\hskip-10pt \times &\hskip-10pt L^p(\Omega;\R^N)
&\rightarrow &W^{1,p}(\Omega) \\
\noalign{\medskip}
(z &\hskip-10pt , &\hskip-10pt Z) &\mapsto &w
\end{array}
\]
is continuous;
\item[$(b)$]
if $z_1, z_2\in L^p(\Omega)$ with $z_1\leq z_2$ a.e. in $\Omega$ 
and $w_1, w_2 \in W^{1,p_-}_0(\Omega)$ are the solutions of
\[
\int_\Omega |\nabla w_k|^{p-2}\nabla w_k\cdot\nabla v\,dx 
= \int_\Omega z_kv\,dx
\qquad\forall v\in W^{1,p_-}_0(\Omega)\,,
\]
then it holds $w_1\leq w_2$ a.e. in $\Omega$.
\end{itemize}
\end{thm}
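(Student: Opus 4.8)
The plan is to prove (a) by the direct method of the calculus of variations and (b) by the classical weak comparison argument, using Proposition~\ref{prop:wp-} throughout to transfer to $W^{1,p_-}_0(\Omega)$ the properties usually invoked for $W^{1,p}_0$ of a regular domain. Recall from Proposition~\ref{prop:wp-} that $W^{1,p_-}_0(\Omega)$ is a closed subspace of $W^{1,p}(\Omega)$, hence a reflexive Banach space; that $\bigl(\int_\Omega|\nabla u|^p\,dx\bigr)^{1/p}$ is an equivalent norm on it; and that the zero extension $\hat{u}$ lies in $W^{1,p}(\R^N)$, so that a Poincar\'e inequality and the Sobolev embeddings are available.

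\textbf{Part~(a).} On $W^{1,p_-}_0(\Omega)$ consider the functional
\[
\Phi(w)=\frac{1}{p}\int_\Omega|\nabla w|^p\,dx-\int_\Omega(zw+Z\cdot\nabla w)\,dx .
\]
By the facts just recalled, $\Phi$ is well defined, strictly convex, coercive (because $t\mapsto\tfrac1p t^p-Ct$ is coercive for $p>1$), weakly lower semicontinuous (the gradient term is convex and strongly continuous, the linear term weakly continuous), and G\^ateaux differentiable with $\Phi'(w)[v]=\int_\Omega|\nabla w|^{p-2}\nabla w\cdot\nabla v\,dx-\int_\Omega(zv+Z\cdot\nabla v)\,dx$. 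The direct method then yields a minimizer, unique by strict convexity; and since $\Phi$ is convex, $w$ minimizes $\Phi$ if and only if $\Phi'(w)=0$, i.e.\ if and only if $w$ solves the stated equation. This proves existence and uniqueness. For the continuity of $(z,Z)\mapsto w$, let $(z_n,Z_n)\to(z,Z)$ in $L^p(\Omega)\times L^p(\Omega;\R^N)$ with corresponding solutions $w_n$, $w$; testing the equation for $w_n$ with $w_n$ bounds $(w_n)$ in $W^{1,p}(\Omega)$, and subtracting the equations for $w_n$ and $w$ and testing with $w_n-w\in W^{1,p_-}_0(\Omega)$ gives
\[
\int_\Omega\bigl(|\nabla w_n|^{p-2}\nabla w_n-|\nabla w|^{p-2}\nabla w\bigr)\cdot\nabla(w_n-w)\,dx
=\int_\Omega\bigl((z_n-z)(w_n-w)+(Z_n-Z)\cdot\nabla(w_n-w)\bigr)\,dx .
\]
The right-hand side tends to $0$ by H\"older's inequality and the uniform Sobolev bound on $w_n-w$, while the left-hand side bounds $\|\nabla w_n-\nabla w\|_{L^p}$ from below through the standard monotonicity inequalities for $\xi\mapsto|\xi|^{p-2}\xi$ (treated separately for $p\ge2$ and $1<p<2$); hence $w_n\to w$ in $W^{1,p}(\Omega)$.

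\textbf{Part~(b).} The key point is that $W^{1,p_-}_0(\Omega)$ is stable under $u\mapsto u^+$: indeed $u^+\in W^{1,p}(\Omega)$ always, and $u^+\in W^{1,s}_0(\Omega)$ for every $s<p$ since $W^{1,s}_0(\Omega)$ is closed under taking positive parts, so $u^+\in W^{1,p}(\Omega)\cap\bigcap_{1<s<p}W^{1,s}_0(\Omega)=W^{1,p_-}_0(\Omega)$. Given $z_1\le z_2$ with solutions $w_1$, $w_2$, put $\varphi=(w_1-w_2)^+\in W^{1,p_-}_0(\Omega)$; since $\nabla\varphi=(\nabla w_1-\nabla w_2)\,\mathbf{1}_{\{w_1>w_2\}}$ a.e., subtracting the two weak formulations tested against $\varphi$ gives
\[
\int_{\{w_1>w_2\}}\bigl(|\nabla w_1|^{p-2}\nabla w_1-|\nabla w_2|^{p-2}\nabla w_2\bigr)\cdot(\nabla w_1-\nabla w_2)\,dx=\int_\Omega(z_1-z_2)\,\varphi\,dx\le0 .
\]
By strict monotonicity the integrand on the left vanishes a.e., so $\nabla\varphi=0$ a.e.; then by Proposition~\ref{prop:wp-}$(b)$ the extension $\hat{\varphi}\in W^{1,p}(\R^N)$ has zero gradient, hence is constant, and since it vanishes on the nonempty set $\R^N\setminus\Omega$ it is identically $0$. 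Therefore $(w_1-w_2)^+=0$, that is, $w_1\le w_2$ a.e.\ in $\Omega$.

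The routine parts are the existence and uniqueness in (a) and the comparison in (b); the main obstacle is the continuity assertion in (a), where one has to combine the two regimes of the monotonicity estimates for the $p$-Laplacian with a uniform a priori bound, and to handle the linear terms through the Sobolev embedding available on $W^{1,p_-}_0(\Omega)$.
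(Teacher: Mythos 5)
Your proof is correct and follows essentially the same route as the paper: the paper disposes of part $(a)$ by exactly the standard convexity/monotonicity argument on the reflexive space $W^{1,p_-}_0(\Omega)$ with the equivalent gradient norm that you spell out (the paper only says this ``easily follows from Proposition~\ref{prop:wp-}''), and its proof of part $(b)$ is precisely your comparison argument with the test function $(w_1-w_2)^+\in W^{1,p_-}_0(\Omega)$ followed by strict monotonicity and the Poincar\'e-type inequality. The only caveat, inherited from the statement itself rather than introduced by you, concerns $1<p<2$: with $z$, $Z$ merely in $L^p(\Omega)$ the terms $\int_\Omega z v\,dx$ and $\int_\Omega Z\cdot\nabla v\,dx$ are controlled by H\"older and the Sobolev embedding only if the data are read in the dual exponent $L^{p'}$ (as the paper in effect does in its applications), so your boundedness, coercivity and continuity estimates should be understood with that convention.
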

\begin{proof}
Assertion $(a)$ easily follows from Proposition~\ref{prop:wp-}.
Since $(w_1-w_2)^+ \in W^{1,p_-}_0(\Omega)$, we have
\begin{alignat*}{3}
&\int_\Omega |\nabla w_1|^{p-2}\nabla w_1\cdot
\nabla (w_1-w_2)^+\,dx
= \int_\Omega  z_1(w_1-w_2)^+\,dx\,,\\
&\int_\Omega |\nabla w_2|^{p-2}\nabla w_2\cdot
\nabla (w_1-w_2)^+\,dx
= \int_\Omega z_2(w_1-w_2)^+\,dx\,,
\end{alignat*}
hence
\begin{multline*}
0\leq 
\int_{\{w_1 > w_2\}} \left(|\nabla w_1|^{p-2}\nabla w_1
- |\nabla w_2|^{p-2}\nabla w_2\right)\cdot
(\nabla w_1 - \nabla w_2)\,dx \\
= \int_\Omega (z_1-z_2)(w_1-w_2)^+\,dx \leq 0\,.
\end{multline*}
It follows $w_1\leq w_2$ a.e. in $\Omega$.
\end{proof}
\begin{lem}
\label{lem:pos}
If $\lambda\in\R$ and 
$u\in W^{1,p_-}_0(\Omega)\setminus\{0\}$ satisfy
\[
\int_\Omega |\nabla u|^{p-2}\nabla u\cdot\nabla v\,dx
= \lambda
\int_\Omega |u|^{p-2}u v\,dx
\qquad\forall v\in W^{1,p_-}_0(\Omega) \,,
\] 
then $u\in L^\infty(\Omega)\cap C^1(\Omega)$.
\par
Moreover, if $\Omega$ is connected and $u\geq 0$ a.e. in $\Omega$,
it holds $u>0$ in $\Omega$.
\end{lem}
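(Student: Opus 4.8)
The plan is to establish Lemma~\ref{lem:pos} by exploiting assertion $(b)$ of Proposition~\ref{prop:wp-}, which tells us that the zero extension $\hat{u}$ of any $u\in W^{1,p_-}_0(\Omega)$ lies in $W^{1,p}(\R^N)$. This is the device that lets us reduce to classical, regularity-by-now-standard facts for the $p$-Laplacian on \emph{open} sets with no boundary constraint, since the equation is only tested against functions in $W^{1,p_-}_0(\Omega)$ but these include all of $C^1_c(\Omega)$.

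First I would prove the $L^\infty$ bound. Fix $k>0$ and test the equation with $v=(|u|-k)^+\operatorname{sgn}u\in W^{1,p_-}_0(\Omega)$; on the set $A_k=\{|u|>k\}$ this gives $\int_{A_k}|\nabla u|^p\,dx=\lambda\int_{A_k}|u|^{p-2}u\,(|u|-k)^+\operatorname{sgn}u\,dx\le|\lambda|\int_{A_k}|u|^{p-1}(|u|-k)\,dx$. Since $p<\infty$, $u$ belongs to some $L^q$ with $q>p$ by the Sobolev embedding applied to $\hat{u}\in W^{1,p}(\R^N)$ (when $p<N$ use $q=p^\ast$; when $p\ge N$ the function $\hat{u}$ is already in every $L^q$, indeed continuous when $p>N$). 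A standard Stampacchia iteration on the level sets $A_k$ — De Giorgi's lemma on the decay of $\mathcal{L}^N(A_k)$ — then yields $u\in L^\infty(\Omega)$. Alternatively one may cite the interior De Giorgi--Nash--Moser theory for quasilinear equations (e.g. Serrin, or Ladyzhenskaya--Ural'tseva), which requires only the local Sobolev structure that $\hat{u}\in W^{1,p}(\R^N)$ provides.

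Once $u\in L^\infty(\Omega)$, the right-hand side $\lambda|u|^{p-2}u$ lies in $L^\infty(\Omega)$, so $u$ is a bounded local weak solution of $-\Delta_p u=f$ with $f\in L^\infty_{loc}(\Omega)$. The interior $C^{1,\alpha}_{loc}$ regularity theory for the $p$-Laplacian (DiBenedetto, Tolksdorf, Lewis) then gives $u\in C^1(\Omega)$. For the second part, assuming $\Omega$ connected and $u\ge 0$ a.e., I would invoke the Harnack inequality of Trudinger for nonnegative solutions of $-\Delta_p u=f$ with bounded $f$, or equivalently the strong maximum principle of V\'azquez: if $u$ vanished at some interior point it would vanish on the connected component, contradicting $u\not\equiv 0$. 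Hence $u>0$ in $\Omega$.

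The main obstacle is really only a bookkeeping one: all the cited regularity and Harnack results are stated for weak solutions on an open set tested against $W^{1,p}_0$ of that set, whereas here the test space is the larger $W^{1,p_-}_0(\Omega)$. But this direction is harmless — $W^{1,p}_0(\Omega)\subseteq W^{1,p_-}_0(\Omega)$ by Proposition~\ref{prop:wp-}$(a)$, and in particular $C^1_c(\Omega)\subseteq W^{1,p_-}_0(\Omega)$, so $u$ is automatically a local weak solution in the usual sense and every interior regularity statement applies verbatim. The only place the enlarged test space is genuinely used is in producing the truncations $(|u|-k)^+\operatorname{sgn}u$ and $(u-k)^+$ as admissible test functions for the Stampacchia argument, and that membership follows from $(b)$ of Proposition~\ref{prop:wp-} together with the fact that $W^{1,p_-}_0(\Omega)$ is closed under the lattice operations composed with the equation's own solution. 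So the proof is essentially a citation of interior theory plus the observation that nothing about $\partial\Omega$ is ever needed.
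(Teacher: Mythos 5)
Your proposal is correct and follows essentially the same route as the paper: test with the truncation $R_k(u)=(|u|-k)^+\operatorname{sgn}u\in W^{1,p_-}_0(\Omega)$, run a Stampacchia/De Giorgi-type level-set iteration (the paper cites Lindqvist's Lemma~4.1 for this step) to get $u\in L^\infty(\Omega)$, then invoke DiBenedetto--Tolksdorf interior regularity for $C^1(\Omega)$ and V\'azquez's strong maximum principle for positivity on a connected $\Omega$. The only cosmetic differences are that you obtain the needed Sobolev inequality from the zero extension $\hat u\in W^{1,p}(\R^N)$ while the paper uses the embedding of $W^{1,s}_0(\Omega)$ for some $s<p$ with $s^*\geq p$, and that the paper disposes of the case $p>N$ by citing Lindqvist via $W^{1,p_-}_0(\Omega)=W^{1,p}_0(\Omega)$.
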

\begin{proof}
If $p>N$ we have $W^{1,p_-}_0(\Omega)=W^{1,p}_0(\Omega)$
and the assertion is proved in~\cite{lindqvist1990}.
Therefore assume that $1<p\leq N$.
If we set
\[
R_k(t) = 
\begin{cases}
t+k &\qquad\text{if $t<-k$}\,,\\
\noalign{\medskip}
0 &\qquad\text{if $-k\leq t \leq k$}\,,\\
\noalign{\medskip}
t-k &\qquad\text{if $t>k$}\,,
\end{cases}
\]
we have $R_k(u)\in W^{1,p_-}_0(\Omega)$, hence
\[
\int_\Omega |\nabla R_k(u)|^p\,dx =
\lambda
\int_\Omega |u|^{p-1} |R_k(u)|\,dx\,.
\]
Let $1<s<p$ with $s^*\geq p$.
If we set
\[
A_k = \left\{x\in\Omega:\,\,|u(x)|>k\right\}=
\left\{x\in\Omega:\,\,R_k(u(x))\neq 0\right\}\,,
\]
it follows
\begin{alignat*}{3}
&\int_{A_k} (|u|-k)^p\,dx
&&\leq
\mathcal{L}^N(A_k)^{1 - \frac{p}{s^*}}
\left(\int_\Omega |R_k(u)|^{s^*}\,dx\right)^{\frac{p}{s^*}}\\
&&&\leq
c(N,s)^p\,\mathcal{L}^N(A_k)^{1 - \frac{p}{s^*}}
\left(\int_\Omega |\nabla R_k(u)|^{s}\,dx\right)^{\frac{p}{s}}\\
&&&\leq
c(N,s)^p\,\mathcal{L}^N(A_k)^{\frac{p}{N}}
\int_\Omega |\nabla R_k(u)|^{p}\,dx\\
&&&=
c(N,s)^p\,\mathcal{L}^N(A_k)^{\frac{p}{N}}\,
\lambda
\int_{A_k} |u|^{p-1} (|u|-k)\,dx\,.
\end{alignat*}
Then the same argument 
of~\cite[Lemma~4.1]{lindqvist1993}
shows that $u\in L^\infty(\Omega)$.
By the results of~\cite{dibenedetto1983}, \cite{tolksdorf1984},
we infer that $u\in C^1(\Omega)$.
\par
If $\Omega$ is connected and $u\geq 0$ a.e. in $\Omega$,
by~\cite[Theorem~5]{vazquez1984} we conclude that
$u>0$ in $\Omega$.
\end{proof}
\noindent
\emph{Proof of Theorem~\ref{thm:lambda1}.}
\par\noindent
If $u\in W^{1,p_-}_0(\Omega)\setminus\{0\}$ satisfies
\[
\int_\Omega |\nabla u|^p\,dx
= \underline{\lambda}_p^{(1)}\int_\Omega |u|^p\,dx \,,
\]
we claim that $u\in L^\infty(\Omega)\cap C^1(\Omega)$ and
either $u>0$ in $\Omega$ or $u<0$ in $\Omega$.
\par
Actually, by the minimality of $\underline{\lambda}_p^{(1)}$ 
it follows that
\[
\int_\Omega |\nabla u|^{p-2}\nabla u\cdot\nabla v\,dx
= \underline{\lambda}_p^{(1)}
\int_\Omega |u|^{p-2}u v\,dx
 \qquad\forall v\in W^{1,p_-}_0(\Omega)
\]
and from Lemma~\ref{lem:pos} we infer that
$u\in L^\infty(\Omega)\cap C^1(\Omega)$.
Moreover, also $w=|u|$ has the same properties and, 
by Lemma~\ref{lem:pos}, satisfies $w>0$ in $\Omega$.
Since $\Omega$ is connected, we have either $u=w$ or
$u=-w$ and the claim is proved.
\par
In particular, there exists 
$w\in W^{1,p_-}_0(\Omega)\cap L^\infty(\Omega)
\cap C^1(\Omega)$ such that $w>0$ in~$\Omega$ and
\[
\int_\Omega |\nabla w|^{p-2}
\nabla w\cdot\nabla v\,dx
= \underline{\lambda}_p^{(1)}
\int_\Omega w^{p-1} v\,dx
 \qquad\forall v\in W^{1,p_-}_0(\Omega) \,.
\]
\indent
Now let $\lambda\in\R$ and 
$u\in W^{1,p_-}_0(\Omega)\setminus\{0\}$ satisfy
\[
\begin{cases}
\text{$u\geq 0$ a.e. in $\Omega$}\,,\\
\noalign{\medskip}
\displaystyle{
\int_\Omega |\nabla u|^{p-2}\nabla u\cdot\nabla v\,dx
= \lambda \int_\Omega u^{p-1} v\,dx}
 \qquad\forall v\in W^{1,p_-}_0(\Omega) \,.
\end{cases}
\] 
Again, from Lemma~\ref{lem:pos} we infer that
$u\in L^\infty(\Omega)\cap C^1(\Omega)$ with $u>0$ in $\Omega$,
so that $\tilde{u}=\log u$ and $\tilde w=\log w$
also belong to $C^1(\Omega)$.
If we set $u_k=u+(1/k)$, $w_k=w+(1/k)$,
$\tilde{u}_k=\log u_k$ and $\tilde w_k=\log w_k$,
we have
\[
\frac{1}{u_k^{p-1}}\,\left(u_k^p - w_k^p\right)\,,\quad
\frac{1}{w_k^{p-1}}\,\left(w_k^p - u_k^p\right) 
\in W^{1,p_-}_0(\Omega)\,.
\]
The first function can be used as a test in
the equation of $u$ and the second one in that of $w$.
As in~\cite[Lemma~3.1]{lindqvist1990}, it follows
\begin{multline*}
\int_\Omega
\left(\lambda\,\frac{u^{p-1}}{u_k^{p-1}} - 
\underline{\lambda}_p^{(1)}\,\frac{w^{p-1}}{w_k^{p-1}}
\right)\left(
u_k^p - w_k^p\right)\,dx \\
=
\int_\Omega u_k^p\left[
|\nabla \tilde{u}_k|^p - |\nabla \tilde{w}_k|^p -
p|\nabla \tilde{w}_k|^{p-2}\nabla \tilde{w}_k\cdot
(\nabla \tilde{u}_k-\nabla \tilde{w}_k)\right]\,dx \\
+
\int_\Omega w_k^p\left[
|\nabla \tilde{w}_k|^p - |\nabla \tilde{u}_k|^p -
p|\nabla \tilde{u}_k|^{p-2}\nabla \tilde{u}_k\cdot
(\nabla \tilde{w}_k-\nabla \tilde{u}_k)\right]\,dx \geq 0\,.
\end{multline*}
Passing to limit as $k\to\infty$ and applying
Lebesgue's theorem and Fatou's lemma, we infer that
\begin{multline*}
(\lambda - \underline{\lambda}_p^{(1)})
\int_\Omega(u^p-w^p)\,dx \\
\geq
\int_\Omega u^p\left[
|\nabla \tilde{u}|^p - |\nabla \tilde{w}|^p -
p|\nabla \tilde{w}|^{p-2}\nabla \tilde{w}\cdot
(\nabla \tilde{u}-\nabla \tilde{w})\right]\,dx\\
+
\int_\Omega w^p\left[
|\nabla \tilde{w}|^p - |\nabla \tilde{u}|^p -
p|\nabla \tilde{u}|^{p-2}\nabla \tilde{u}\cdot
(\nabla \tilde{w}-\nabla \tilde{u})\right]\,dx \geq 0\,.
\end{multline*}
Since $u$ can be replaced by $tu$ for any $t>0$,
it follows $\lambda = \underline{\lambda}_p^{(1)}$.
Then the strict convexity of
$\left\{\xi\mapsto |\xi|^p\right\}$ implies that
$\nabla(\tilde{u}-\tilde{w})=0$ in $\Omega$.
Since $\Omega$ is connected, we infer that
$\tilde{u}=\tilde{w}+c$, hence
$u=e^c\,w$.
\par
On the other hand, if 
$u\in W^{1,p_-}_0(\Omega)\setminus\{0\}$ satisfies
\[
\int_\Omega |\nabla u|^{p-2}\nabla u\cdot\nabla v\,dx
= 	\underline{\lambda}_p^{(1)}
\int_\Omega |u|^{p-2}u v\,dx
 \qquad\forall v\in W^{1,p_-}_0(\Omega)\,,
\]
it follows
\[
\int_\Omega |\nabla u|^p\,dx
= \underline{\lambda}_p^{(1)}\int_\Omega |u|^p\,dx \,,
\]
hence $u\in C^1(\Omega)$ with either $u>0$ in $\Omega$
or $u<0$ in $\Omega$.
We infer that $u=t w$ for some $t\neq 0$.
\qed

%--------------------------------------------------------------------

%
\end{document}